\numberwithin{equation}{section}
\newcommand\cP{{\mathcal P}}
\DeclareMathOperator{\Ker}{Ker}
\DeclareMathOperator{\Ran}{Ran}
\DeclareMathOperator{\supp}{supp}
\DeclareMathOperator{\spanning}{span}
\DeclareMathOperator{\trace}{trace}
\theoremstyle{plain}
  \newtheorem{theorem}{Theorem}[section]
  \newtheorem{proposition}[theorem]{Proposition}
  \newtheorem{lemma}[theorem]{Lemma}
  \newtheorem{corollary}[theorem]{Corollary}
  \newtheorem{example}{Example}[section]
\theoremstyle{definition}
  \newtheorem{definition}[theorem]{Definition}
\begin{document}
\include{psfig}
\title[Probabilistic Dual Frames and Optimal Transport]{Probabilistic Dual Frames and Minimization of Dual Frame Potentials}

\author{Dongwei Chen}
\thanks{Dongwei Chen, dongwei.chen@colostate.edu, ORCID: 0000-0003-1956-7683}
\thanks{Department of Mathematics, Colorado State University, Fort Collins, CO, USA, 80523}

\begin{abstract}
This paper studies probabilistic dual frames and the associated dual frame potentials from the perspective of optimal mass transport. The main contribution of this work shows that given a probabilistic frame, its associated dual frame potential is minimized if and only if the probabilistic frame is tight and the probabilistic dual frame is the canonical dual. In particular, the tightness condition can be dropped if the probabilistic dual frame potential is minimized only among probabilistic dual frames of pushforward type.
\end{abstract}

\keywords{probabilistic frame; probabilistic dual frame; frame potential; probabilistic dual frame potential; optimal transport; Wasserstein distance}
\subjclass[2020]{Primary 42C15; Secondary 49Q22}
\maketitle

\section{Introduction and Main Results}\label{introduction}
A sequence of vectors $\{{\bf f}_i\}_{i=1}^\infty$ in a separable Hilbert space $\mathcal{H}$ is called a \textit{frame} if  there exist $0<A \leq B < \infty$ such that for any ${\bf f} \in \mathcal{H}$, 
\begin{equation*}
    A \Vert {\bf f}  \Vert^2 \leq\sum_{i =1}^{\infty} \vert \left\langle {\bf f},{\bf f}_i \right\rangle \vert ^2  \leq B\Vert {\bf f} \Vert^2.
\end{equation*} 
Furthermore, $\{{\bf f}_i\}_{i=1}^\infty$ is called a tight frame if $A = B$ and Parseval if $A = B=1$. Frames were first introduced by Duffin and Schaeffer in the context of nonharmonic Fourier analysis \cite{duffin1952class} and have been applied in many areas, such as the Kadison-Singer problem \cite{casazza2013kadison},  time-frequency analysis \cite{grochenig2001foundations}, and wavelet analysis \cite{daubechies1992ten}.  As the extension of an orthonormal basis, a frame permits linear dependence between frame elements and allows each vector in $\mathcal{H}$ to be written as a linear combination of frame elements in a redundant way.  
A sequence of vectors $\{{\bf g}_i\}_{i=1}^\infty$ in $\mathcal{H}$ is a \textit{dual frame} for the frame $\{{\bf f}_i\}_{i=1}^\infty$ if for any ${\bf f} \in \mathcal{H}$, 
\begin{equation*}
   {\bf f} = \sum_{i =1}^{\infty}  \left\langle {\bf f},{\bf g}_i \right\rangle  {\bf f}_i   = \sum_{i =1}^{\infty}  \left\langle {\bf f},{\bf f}_i \right\rangle  {\bf g}_i.
\end{equation*}
An example of dual frames is the canonical dual $\{{\bf S}^{-1}{\bf f}_i\}_{i=1}^\infty$ where ${\bf S}: \mathcal{H} \rightarrow \mathcal{H}$ is the \textit{frame operator} given by $ {\bf S}({\bf f}) = \sum_{i =1}^{\infty}  \left\langle {\bf f},{\bf f}_i \right\rangle  {\bf f}_i $, $ {\bf f} \in \mathcal{H}$. 
Indeed, ${\bf S}: \mathcal{H} \rightarrow \mathcal{H}$ is bounded, positive, self-adjoint, and invertible with a bounded inverse.
See \cite{christensen2016introduction} for more details on frames.

Let $\{{\bf y}_i\}_{i=1}^N $ be a (finite) frame in the Euclidean space $\mathbb{R}^n$ with frame bounds $0 <A \leq B$ and $\mu_f := \frac{1}{N}  \sum\limits_{i=1}^N  \delta_{{\bf y}_i}$ the uniform distribution on $\{{\bf y}_i\}_{i=1}^N $ where $N \geq n$. Then for any ${\bf x} \in \mathbb{R}^n$,
\begin{equation*}
    \frac{A}{N} \Vert {\bf x}  \Vert^2 \leq\int_{\mathbb{R}^n} \vert \left\langle {\bf x},{\bf y} \right\rangle \vert ^2 d\mu_f({\bf y})  = \frac{1}{N}\sum_{i =1}^{N} \vert \left\langle {\bf x},{\bf y}_i \right\rangle \vert ^2  \leq \frac{B}{N} \Vert {\bf x}  \Vert^2.
\end{equation*}
In this case, finite frames can be viewed as discrete probabilistic measures on $\mathbb{R}^n$. Inspired by this insight, Martin Ehler and Kasso A. Okoudjou in \cite{ehler2012random, ehler2013probabilistic} introduced the concept of probabilistic frame, which is a probability measure on $\mathbb{R}^n$ satisfying a so-called frame inequality. Throughout the paper, let $\mathcal{P}(\mathbb{R}^n)$ be the set of Borel probability measures on $\mathbb{R}^n$ and  for $\mu \in \mathcal{P}(\mathbb{R}^n)$, we define its second moment as 
$$M_2(\mu) := \int_{\mathbb{R}^n} \Vert {\bf x} \Vert ^2 d\mu({\bf x}).$$
Furthermore, let
$\mathcal{P}_2(\mathbb{R}^n) \subset \mathcal{P}(\mathbb{R}^n)$ be the set of probability measures with finite second moments, that is, 
\begin{equation*}
\mathcal{P}_2(\mathbb{R}^n) := \left \{ \mu \in \mathcal{P}(\mathbb{R}^n):  \int_{\mathbb{R}^n} \Vert {\bf x} \Vert ^2 d\mu({\bf x}) < + \infty \right \}.
\end{equation*}
In addition, the \textit{support} of $\mu \in  \mathcal{P}(\mathbb{R}^n)$ is given by 
$$\supp(\mu) := \{ {\bf x} \in \mathbb{R}^n: \text{for any} \ r>0, \mu(B_r({\bf x}))>0  \},$$
where $B_r({\bf x})$ is an open ball centered at ${\bf x}$ with radius $r>0$.
If $\mu \in  \mathcal{P}(\mathbb{R}^n) $ and $f: \mathbb{R}^n \rightarrow \mathbb{R}^m$ is a Borel measurable map where $n$ may differ from $m$, then $f_{\#} \mu \in \mathcal{P}(\mathbb{R}^m)$ is called the \textit{pushforward} of $\mu$ by  the map $f$,  and is defined as
$$f_{\#} \mu (E) := (\mu \circ f^{-1}) (E) = \mu \big (f^{-1}(E) \big ) \ \text{for any Borel set}  \ E \subset \mathbb{R}^m.$$ If $f$ is linear and represented by a matrix ${\bf A}$ with respect to some basis, then ${\bf A}_{\#} \mu$ is used to denote $f_{\#} \mu$. In particular, $(\mathbf{Id}, f)$ is used to denote the map from $\mathbb{R}^n$ to $\mathbb{R}^n \times \mathbb{R}^m$ via $\mathbf{x} \mapsto (\mathbf{x}, f(\mathbf{x}))$. In such a case, $(\mathbf{Id}, f)_\#\mu$ is a probability measure on $\mathbb{R}^n \times \mathbb{R}^m$ that is supported on the graph of $f$.  Then, we have the following definition for probabilistic frames. 

\begin{definition}
$\mu \in  \mathcal{P}(\mathbb{R}^n)$ is said to be a \textit{probabilistic frame} for $\mathbb{R}^n$ if there exist $0<A \leq B < \infty$ such that for any ${\bf x} \in \mathbb{R}^n$, 
\begin{equation*}
    A \Vert {\bf x}  \Vert^2 \leq\int_{\mathbb{R}^n} \vert \left\langle {\bf x},{\bf y} \right\rangle \vert ^2 d\mu({\bf y})  \leq B \Vert {\bf x}  \Vert^2.
\end{equation*}
$\mu$ is called a tight probabilistic frame if $A = B$ and Parseval if $A = B=1$. Moreover, $\mu$ is said to be a Bessel probability measure if only the upper bound holds. 
\end{definition}

Clearly if $\mu \in  \mathcal{P}_2(\mathbb{R}^n)$, $\mu$ is Bessel with bound $M_2(\mu) $. One can also define the \textit{frame operator} ${\bf S}_\mu$ for a Bessel probability measure $\mu$  as the following matrix:
    \begin{equation*}
          {\bf S}_\mu := \int_{\mathbb{R}^n}  {\bf y} {\bf y}^t d \mu({\bf y}).
    \end{equation*}
Probabilistic frames can be characterized by frame operators as below.   
 \begin{proposition}[Theorem 12.1 in \cite{ehler2013probabilistic}, Proposition 3.1 in \cite{maslouhi2019probabilistic}] \label{TAcharacterization}
Let $\mu  \in \mathcal{P}(\mathbb{R}^n)$. 
\begin{itemize}
    \item[$(1)$] $\mu$ is a probabilistic frame $\Leftrightarrow$ ${\bf S}_{\mu}$ is positive definite $\Leftrightarrow$ $\mu \in \mathcal{P}_2(\mathbb{R}^n)$ and $\spanning\{\supp(\mu)\} = \mathbb{R}^n$.
   
     \item[$(2)$]  $\mu$ is a tight probabilistic frame with bound $A > 0$ $\Leftrightarrow$ ${\bf S}_{\mu} = A \ {\bf Id}$ where ${\bf Id}$ is the identity matrix of size $n \times n$. 
\end{itemize}
\end{proposition}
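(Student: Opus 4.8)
The plan is to reduce everything to the single identity
$$\langle {\bf S}_\mu {\bf x}, {\bf x}\rangle = \int_{\mathbb{R}^n} |\langle {\bf x}, {\bf y}\rangle|^2 \, d\mu({\bf y}), \qquad {\bf x} \in \mathbb{R}^n,$$
which follows at once from ${\bf S}_\mu = \int {\bf y}{\bf y}^t \, d\mu$ and $\langle {\bf y}{\bf y}^t {\bf x}, {\bf x}\rangle = \langle {\bf x}, {\bf y}\rangle^2$. This converts the frame inequality into a statement about the quadratic form of ${\bf S}_\mu$. Before invoking it I would record that the upper frame bound already forces $\mu \in \mathcal{P}_2(\mathbb{R}^n)$: testing the inequality against the standard basis vectors gives $\int y_i^2 \, d\mu \leq B$, so $M_2(\mu) \leq nB < \infty$ and ${\bf S}_\mu$ is a genuine finite symmetric matrix; conversely any condition mentioning ${\bf S}_\mu$ tacitly presupposes $\mu \in \mathcal{P}_2(\mathbb{R}^n)$.

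For part $(1)$ I would prove the three conditions equivalent in the cyclic spirit (a)$\,\Leftrightarrow\,$(b)$\,\Leftrightarrow\,$(c), where (a) is the frame property, (b) is positive definiteness of ${\bf S}_\mu$, and (c) is $\mu \in \mathcal{P}_2(\mathbb{R}^n)$ together with $\spanning\{\supp(\mu)\} = \mathbb{R}^n$. From the identity, ${\bf S}_\mu$ is automatically symmetric and positive semidefinite, so the spectral theorem yields real eigenvalues $0 \le \lambda_1 \le \cdots \le \lambda_n$ with Rayleigh bounds $\lambda_1 \|{\bf x}\|^2 \le \langle {\bf S}_\mu {\bf x}, {\bf x}\rangle \le \lambda_n \|{\bf x}\|^2$. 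The frame inequality then holds for some $0 < A \le B < \infty$ exactly when $\lambda_1 > 0$, the upper bound $\lambda_n < \infty$ being automatic once $\mu \in \mathcal{P}_2(\mathbb{R}^n)$; this is precisely positive definiteness, settling (a)$\,\Leftrightarrow\,$(b).

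For (b)$\,\Leftrightarrow\,$(c) I would identify the kernel of ${\bf S}_\mu$. Since ${\bf S}_\mu$ is positive semidefinite, ${\bf x} \in \Ker({\bf S}_\mu)$ iff $\langle {\bf S}_\mu {\bf x}, {\bf x}\rangle = 0$ iff $\int |\langle {\bf x}, {\bf y}\rangle|^2 \, d\mu = 0$ iff $\langle {\bf x}, {\bf y}\rangle = 0$ for $\mu$-a.e.\ ${\bf y}$. The one place calling for care is upgrading ``a.e.'' to ``for every point of the support'': if ${\bf x} \ne 0$ the set $\{{\bf y} : \langle {\bf x}, {\bf y}\rangle \ne 0\}$ is open and $\mu$-null, hence disjoint from $\supp(\mu)$, since every support point has neighborhoods of positive measure. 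This gives $\Ker({\bf S}_\mu) = (\spanning\{\supp(\mu)\})^\perp$, so $\Ker({\bf S}_\mu) = \{0\}$ is equivalent to $\spanning\{\supp(\mu)\} = \mathbb{R}^n$, completing part $(1)$.

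Part $(2)$ is then immediate from the same identity plus polarization: if $\mu$ is tight with bound $A$, then $\langle {\bf S}_\mu {\bf x}, {\bf x}\rangle = A\|{\bf x}\|^2 = \langle A\,{\bf Id}\,{\bf x}, {\bf x}\rangle$ for all ${\bf x}$, and two symmetric matrices with equal quadratic forms coincide, so ${\bf S}_\mu = A\,{\bf Id}$; the converse just substitutes ${\bf S}_\mu = A\,{\bf Id}$ back into the identity. I expect the only genuine obstacle to be the support/kernel identification in (b)$\,\Leftrightarrow\,$(c), where the openness of $\{{\bf y} : \langle {\bf x}, {\bf y}\rangle \ne 0\}$ and the definition of $\supp(\mu)$ must be combined; the remainder is the spectral theorem and polarization applied to the quadratic-form identity.
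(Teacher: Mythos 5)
Your argument is correct, and in fact the paper offers no proof of this proposition to compare against: it is quoted as a known result (Theorem 12.1 of \cite{ehler2013probabilistic}, Proposition 3.1 of \cite{maslouhi2019probabilistic}), so your write-up is the standard argument via the quadratic-form identity $\langle {\bf S}_\mu {\bf x},{\bf x}\rangle=\int |\langle {\bf x},{\bf y}\rangle|^2\,d\mu({\bf y})$, the Rayleigh quotient, the kernel--support identification, and polarization. The only step you leave implicit is the inclusion $(\spanning\{\supp(\mu)\})^{\perp}\subseteq \Ker({\bf S}_\mu)$, which needs the standard fact that $\mu(\mathbb{R}^n\setminus\supp(\mu))=0$ (the complement of the support is a countable union of $\mu$-null balls by Lindel\"of); with that one line added, the proof is complete.
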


Since probabilistic frames have finite second moments, they naturally lie in $\mathcal{P}_2(\mathbb{R}^n)$ and can therefore be studied using optimal transport and the 2-Wasserstein distance \cite{ehler2013probabilistic, wickman2014optimal}. Given two probabilistic frames $\mu$ and $\nu$, let $\Gamma(\mu,\nu)$ be the set of transport couplings with marginals $\mu$ and $\nu$, that is,
$$ \Gamma(\mu,\nu) :=  \left \{ \gamma \in \mathcal{P}(\mathbb{R}^n \times \mathbb{R}^n): {\pi_{{ x}}}_{\#} \gamma = \mu, \ {\pi_{{ y}}}_{\#} \gamma = \nu \right \},$$
where $\pi_{{ x}}$, $\pi_{{ y}}$ are projections onto the ${\bf x}$ and ${\bf y}$ coordinates: for any $({\bf x},{\bf y}) \in \mathbb{R}^n \times \mathbb{R}^n $, $\pi_{{ x}}({\bf x}, {\bf y}) = {\bf x}$ and $ \pi_{{ y}}({\bf x},{\bf y}) = {\bf y}$. The 2-Wasserstein distance $ W_2(\mu,\nu)$ is often used to quantify the distance between probabilistic frames $\mu$ and $\nu$:  
$$ W_2(\mu,\nu) := \left (\underset{\gamma \in \Gamma(\mu,\nu)}{\text{inf}}  \int_{\mathbb{R}^n \times \mathbb{R}^n}  \left \Vert {\bf x}-{\bf y}\right \Vert^2 \ d\gamma({\bf x},{\bf y}) \right)^{1/2}.$$
This is of interest even if one is only concerned with finite frames, since the $2$-Wasserstein distance can quantify the distance between probabilistic frames induced by finite frames of different cardinalities.
For interested readers, we refer to \cite{ehler2013probabilistic, wickman2014optimal, ehler2012random, ehler2012minimization, chen2023paley,chen2025probabilistic} for more details on probabilistic frames, and \cite{figalli2021invitation} for optimal transport.

In physics, engineering, and signal processing, it is a common situation to have a measure
space $(\Omega,\mu)$, a Hilbert space $\mathcal{H}$, and a measurable map $\Psi: \Omega \rightarrow \mathcal{H}$ given by $ t  \mapsto \Psi_t$ such that
\begin{equation*}
\|{\bf f}\|^2 = \int_{\Omega} |\langle {\bf f}, \Psi_t\rangle|^2 d\mu(t), \  \text{for all } {\bf f} \in \mathcal{H}.
\end{equation*}
Such a measurable map is called a coherent state, and the Gabor transform on $L^2(\mathbb{R})$ and Fourier transform on $L^2(\mathbb{R})$ (when restricted to a set of measure $2\pi$) are such examples. Modern frame theory was initiated as a method of discretizing coherent states, and all the examples of frames given in the landmark paper \cite{daubechies1986painless} were created by sampling different coherent states on lattices. Coherent states were generalized to continuous frames by relaxing the equality to an inequality \cite{ali1993continuous}: a measurable map $\Psi: \Omega \rightarrow \mathcal{H}$ given by $ t  \mapsto \Psi_t$ is called a \emph{continuous frame} of $\mathcal{H}$ if there exist constants $0 < A \le B < \infty$ so that
\begin{equation*}
A \|{\bf f}\|^2 \le \int_{\Omega} |\langle {\bf f}, \Psi_t\rangle|^2  d\mu(t) \le B\|{\bf f}\|^2, \  \text{for all } {\bf f} \in \mathcal{H}.
\end{equation*}

We can think of a probabilistic frame as the distribution (or pushforward measure) on $\mathbb{R}^n$ of a continuous frame on a probability measure space. 
\begin{proposition}[Proposition 1.12 and discussion in \cite{wickman2014optimal}] 
    Suppose $\mu \in  \mathcal{P}(\mathbb{R}^n)$ is a probabilistic frame. Then the identity map $\mathbf{Id}: \mathbb{R}^n \rightarrow \mathbb{R}^n$
    is a continuous frame for $\mathbb{R}^n$ regarding $\mu$ with the same bounds as $\mu$. Conversely, if $\Psi: \Omega \rightarrow \mathbb{R}^n$ is a continuous frame for $\mathbb{R}^n$ with bounds $0<A \leq B$ regarding $(\Omega, \mu)$ where $\mu(\Omega) < \infty$, then $\Psi_\#(\frac{\mu}{\mu(\Omega)})$ is a probabilistic frame for $\mathbb{R}^n$ with bounds $\frac{A}{\mu(\Omega)}$ and $\frac{B}{\mu(\Omega)}$.
\end{proposition}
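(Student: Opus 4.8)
The plan is to treat the two directions separately, with essentially all of the content concentrated in a single application of the change-of-variables formula for pushforward measures. For the forward direction, I would simply unwind the definitions. Taking $\Omega = \mathbb{R}^n$, the measure $\mu$, and $\Psi = \mathbf{Id}$, the frame element $\Psi_{\mathbf{y}}$ is the point $\mathbf{y}$ itself, so the continuous frame inequality
\begin{equation*}
A\|\mathbf{x}\|^2 \le \int_{\mathbb{R}^n} |\langle \mathbf{x}, \Psi_{\mathbf{y}}\rangle|^2 \, d\mu(\mathbf{y}) \le B\|\mathbf{x}\|^2
\end{equation*}
is verbatim the defining inequality of a probabilistic frame. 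Hence $\mathbf{Id}$ is a continuous frame with respect to $\mu$, and the bounds transfer unchanged.

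For the converse, I would set $\nu := \Psi_{\#}\big(\tfrac{\mu}{\mu(\Omega)}\big)$ and first note that $\tfrac{\mu}{\mu(\Omega)}$ is a genuine probability measure, since $0 < \mu(\Omega) < \infty$, so that $\nu \in \mathcal{P}(\mathbb{R}^n)$ as a pushforward preserves total mass. The key step is the pushforward change-of-variables identity: for each fixed $\mathbf{x} \in \mathbb{R}^n$, applying it to the Borel function $\mathbf{y} \mapsto |\langle \mathbf{x}, \mathbf{y}\rangle|^2$ gives
\begin{equation*}
\int_{\mathbb{R}^n} |\langle \mathbf{x}, \mathbf{y}\rangle|^2 \, d\nu(\mathbf{y}) = \frac{1}{\mu(\Omega)} \int_{\Omega} |\langle \mathbf{x}, \Psi_t\rangle|^2 \, d\mu(t).
\end{equation*}
Inserting the continuous frame bounds $A\|\mathbf{x}\|^2 \le \int_\Omega |\langle \mathbf{x}, \Psi_t\rangle|^2 \, d\mu(t) \le B\|\mathbf{x}\|^2$ and dividing through by $\mu(\Omega)$ then yields
\begin{equation*}
\frac{A}{\mu(\Omega)} \|\mathbf{x}\|^2 \le \int_{\mathbb{R}^n} |\langle \mathbf{x}, \mathbf{y}\rangle|^2 \, d\nu(\mathbf{y}) \le \frac{B}{\mu(\Omega)} \|\mathbf{x}\|^2,
\end{equation*}
which is exactly the probabilistic frame inequality for $\nu$ with the claimed bounds $\tfrac{A}{\mu(\Omega)}$ and $\tfrac{B}{\mu(\Omega)}$.

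I do not expect a genuine obstacle here; the only point requiring (minor) care is the justification of the change-of-variables formula, which is legitimate because $\Psi$ is Borel measurable and $\mathbf{y} \mapsto |\langle \mathbf{x}, \mathbf{y}\rangle|^2$ is continuous, hence Borel. If one wishes to be fully rigorous, the identity can be verified first for indicator functions and then extended to the integrand by the standard monotone-class approximation. I would close by remarking that, since the bounds are preserved in both directions, the two constructions are genuinely inverse up to normalization, with $\mathbf{Id}$ furnishing the canonical probabilistic frame attached to $\mu$.
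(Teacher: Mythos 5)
Your proof is correct. The paper does not actually prove this proposition --- it is quoted from Wickman's thesis and stated without proof --- but your argument (unwinding the definitions for the forward direction, and the pushforward change-of-variables identity applied to $\mathbf{y} \mapsto |\langle \mathbf{x},\mathbf{y}\rangle|^2$ for the converse) is exactly the standard one, and the only point you gloss over, namely that $\mu(\Omega)>0$, follows immediately from the lower frame bound $A>0$.
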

Although continuous frames are well studied, there are numerous advantages to considering probabilistic frames specifically in the context of the frame potential. This is because the frame potential only cares about the probability distribution of a continuous frame (namely, how frame vectors are spread out in $\mathcal{H}$) and not the particular indexing of $t \mapsto \Psi_t$.

One of the main contributions of this work is about the minimization of probabilistic dual frame potentials. The energy minimization problem is an active topic in mathematics and people in the frame theory community are particularly interested in the minimization of frame potentials \cite{benedetto2003finite, ehler2012minimization, christensen2020equiangular, aceska2022cross,wickman2023gradient}. Given a unit-norm frame $\{{\bf f}_i\}_{i=1}^N \subset \mathbb{S}^{n-1}$ where $N \geq n$, its frame potential (FP) is defined as 
$$\text{FP}(\{{\bf f}_i\}_{i=1}^N ) := \sum_{i=1}^N \sum_{j=1}^N |\langle {\bf f}_i, {\bf f}_j \rangle |^2.$$
John J. Benedetto and Matthew Fickus in \cite{benedetto2003finite} showed that  
$\text{FP}(\{{\bf f}_i\}_{i=1}^N ) \geq \frac{N^2}{n},$
and the equality holds if and only if the unit-norm frame $\{{\bf f}_i\}_{i=1}^N$ is tight. Similarly, the authors in \cite{ehler2012minimization, wickman2023gradient} defined the probabilistic frame potential (PFP) for a given probability measure $\mu \in \mathcal{P}_2(\mathbb{R}^n)$ and found its lower bound: 
$$\text{PFP}(\mu) := \int_{\mathbb{R}^n} \int_{\mathbb{R}^n} |\langle {\bf x}, {\bf y} \rangle|^2 d\mu({\bf x}) d\mu({\bf y}) \geq \frac{M_2^2(\mu)}{n},$$
and if $\mu \neq \delta_{{\bf 0}}$, the equality holds if and only if $\mu$ is a tight probabilistic frame.

The main results given by Theorem \ref{DualFramePotential} and Theorem \ref{dualframepotential2} in this paper say that given a probabilistic frame $\mu$ with bounds $0<A \leq B$ and its probabilistic dual frame $\nu$, the probabilistic dual frame potential (PDFP) between $\mu$ and $\nu$ satisfies
          \begin{equation*}
              \text{PDFP}_\mu(\nu) := \int_{\mathbb{R}^n}  \int_{\mathbb{R}^n} |\langle {\bf x}, {\bf y} \rangle |^2 d\mu({\bf x}) d\nu({\bf y}) \geq  \frac{A}{B} n,
          \end{equation*}
and the equality holds if and only if $\mu$ is a tight probabilistic frame and $\nu $ is the canonical dual $ {{\bf S}^{-1}_\mu}_\# \mu$. Moreover, the tightness condition can be dropped: if $T_{\#}\mu$ is a probabilistic dual frame to $\mu$ where the map $T: \mathbb{R}^n \rightarrow \mathbb{R}^n$ is measurable, then 
    \begin{equation*}
              \text{PDFP}_\mu(T_{\#}\mu):= \int_{\mathbb{R}^n}  \int_{\mathbb{R}^n}  |\langle {\bf x}, T({\bf y}) \rangle|^{2} d\mu({\bf x}) d\mu({\bf y}) \geq n,
    \end{equation*}
and the equality holds if and only if $T({\bf y}) =  {\bf S}_\mu^{-1}{\bf y}$ for $\mu$ almost all ${\bf y} \in \mathbb{R}^n$, which means that the probabilistic dual frame $T_{\#}\mu$ is indeed the canonical dual ${{\bf S}_\mu^{-1}}_\#\mu$ up to a $\mu$-null set. Therefore, the canonical dual is the minimizer of the probabilistic dual frame potential among all probabilistic dual frames of pushforward type.

This paper is organized as follows. In Section \ref{dualframe}, we prove several properties of probabilistic dual frames, and we especially focus on the probabilistic dual frames of pushforward type. In Section \ref{dualpotential}, we give a lower bound for the probabilistic dual frame potential, which is attained if and only if the given probabilistic frame is tight and the associated probabilistic dual frame is the canonical dual. And the tightness assumption can be dropped if the dual frame potential is minimized only among probabilistic dual frames of pushforward type. In Section \ref{section4}, we discuss related open questions about probabilistic dual frame potentials. Finally, in Section \ref{ProofOfLemmas}, we give proofs of Proposition \ref{denseProperty} and Proposition \ref{dualperturbation}.

\section{Probabilistic Dual Frames}\label{dualframe}

This section starts with probabilistic dual frames. Throughout the paper,  we use ${\bf A}$ and ${\bf x}$ for a real matrix and a vector in $\mathbb{R}^n$, and $A$ and $x$ for real numbers. Especially, ${\bf 0}$ is used to denote the zero vector in $\mathbb{R}^n$, ${\bf 0}_{n \times n}$ the $n \times n$ zero matrix, and $0$ the real number zero. We use ${\bf Id}$ for the $n \times n$ identity matrix, $\text{Id}$ for the identity map on $\mathbb{R}$, and  ${\bf A}^t$ for the transpose of the matrix ${\bf A}$. 

\begin{definition}
Let $\mu$ be a probabilistic frame. $\nu \in \mathcal{P}_2(\mathbb{R}^n)$ is called a \textit{probabilistic dual frame} of $\mu$ if there exists $\gamma \in \Gamma(\mu, \nu)$ such that
\begin{equation*}
  \int_{\mathbb{R}^n \times \mathbb{R}^n} {\bf x}{\bf y}^t d\gamma({\bf x, y}) = {\bf Id}.
\end{equation*}
\end{definition}

Probabilistic dual frames are also known as transport duals, and it was shown that every probabilistic dual frame is a probabilistic frame \cite{wickman2017duality}. If $\mu$ is a probabilistic frame with bounds $0<A \leq B$, then ${{\bf S}_\mu^{-1}}_{\#}\mu$ is called the canonical probabilistic dual frame of $\mu$ (with bounds $\frac{1}{B}$ and $ \frac{1}{A}$), since one can let $\gamma:= {({\bf Id}, {\bf S}_\mu^{-1})}_{\#}\mu \in \Gamma(\mu, {{\bf S}_\mu^{-1}}_{\#}\mu)$. We also have the following equivalence for probabilistic dual frames. 
 
\begin{lemma}\label{DualFrameEquivalence}
    Let $\mu$ be a probabilistic frame. Suppose $\nu \in \mathcal{P}_2(\mathbb{R}^n)$ and $\gamma \in \Gamma(\mu, \nu)$. The following are equivalent:
    \begin{itemize}
        \item[$(1).$] $\nu$ is a probabilistic dual frame of $\mu$ with respect to $\gamma \in \Gamma(\mu, \nu)$.
        \item[$(2).$] For any ${\bf f} \in \mathbb{R}^n$, 
        $${\bf f} = \int_{\mathbb{R}^n \times \mathbb{R}^n} {\bf x} \langle {\bf y}, {\bf f}\rangle d\gamma({\bf x},{\bf y}) \ \text{or} \ {\bf f} = \int_{\mathbb{R}^n \times \mathbb{R}^n} {\bf y} \langle {\bf x}, {\bf f}\rangle d\gamma({\bf x},{\bf y}).$$
        \item[$(3).$] For any ${\bf f}, {\bf g} \in \mathbb{R}^n$, 
        $$\langle {\bf f}, {\bf g} \rangle = \int_{\mathbb{R}^n \times \mathbb{R}^n} \langle {\bf x}, {\bf g} \rangle \langle {\bf y}, {\bf f}\rangle d\gamma({\bf x},{\bf y})  \ \text{or} \  \langle {\bf f}, {\bf g} \rangle= \int_{\mathbb{R}^n \times \mathbb{R}^n} \langle {\bf y}, {\bf g}\rangle \langle {\bf x}, {\bf f} \rangle  d\gamma({\bf x},{\bf y}) .$$ 
    \end{itemize}
\end{lemma}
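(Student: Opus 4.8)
The plan is to reduce all five assertions to the single matrix identity $M = {\bf Id}$, where I set $M := \int_{\mathbb{R}^n \times \mathbb{R}^n} {\bf x}{\bf y}^t \, d\gamma({\bf x},{\bf y})$. First I would check that $M$ is well defined, i.e. that each entry $\int x_i y_j \, d\gamma$ is finite. Since $\mu$ is a probabilistic frame it lies in $\mathcal{P}_2(\mathbb{R}^n)$ by \Cref{TAcharacterization}, and $\nu \in \mathcal{P}_2(\mathbb{R}^n)$ by hypothesis, so Cauchy--Schwarz gives
$$\int_{\mathbb{R}^n \times \mathbb{R}^n} \Vert {\bf x}\Vert\,\Vert {\bf y}\Vert \, d\gamma \leq \big(M_2(\mu)\big)^{1/2}\big(M_2(\nu)\big)^{1/2} < \infty.$$
This integrability is exactly what licenses every subsequent interchange of a fixed test vector with the integral sign.

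The heart of the argument is a handful of elementary rewritings, each obtained by pulling the constant vectors ${\bf f}, {\bf g}$ out of the $\gamma$-integral (legitimate by the bound above) and using that for column vectors $\langle {\bf x}, {\bf g}\rangle = {\bf g}^t {\bf x}$ is a scalar, so that $\langle {\bf x}, {\bf g}\rangle\langle {\bf y}, {\bf f}\rangle = {\bf g}^t({\bf x}{\bf y}^t){\bf f}$. Concretely,
\begin{align*}
\int_{\mathbb{R}^n \times \mathbb{R}^n} {\bf x}\langle {\bf y}, {\bf f}\rangle \, d\gamma &= \Big(\int_{\mathbb{R}^n \times \mathbb{R}^n} {\bf x}{\bf y}^t \, d\gamma\Big){\bf f} = M{\bf f}, \\
\int_{\mathbb{R}^n \times \mathbb{R}^n} {\bf y}\langle {\bf x}, {\bf f}\rangle \, d\gamma &= \Big(\int_{\mathbb{R}^n \times \mathbb{R}^n} {\bf y}{\bf x}^t \, d\gamma\Big){\bf f} = M^t{\bf f}, \\
\int_{\mathbb{R}^n \times \mathbb{R}^n} \langle {\bf x}, {\bf g}\rangle\langle {\bf y}, {\bf f}\rangle \, d\gamma &= {\bf g}^t M {\bf f},
\end{align*}
and symmetrically $\int \langle {\bf y}, {\bf g}\rangle\langle {\bf x}, {\bf f}\rangle \, d\gamma = {\bf g}^t M^t {\bf f}$.

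With these identities in hand the equivalences become immediate. Condition $(1)$ is literally $M = {\bf Id}$. The first equation of $(2)$ says $M{\bf f} = {\bf f}$ for every ${\bf f}$, which holds iff $M = {\bf Id}$; the second says $M^t{\bf f} = {\bf f}$ for every ${\bf f}$, i.e. $M^t = {\bf Id}$, which is the same condition since the identity is symmetric. For $(3)$, the first equation reads ${\bf g}^t M {\bf f} = \langle {\bf f}, {\bf g}\rangle = {\bf g}^t {\bf f}$ for all ${\bf f}, {\bf g}$; testing with ${\bf f} = {\bf e}_j$, ${\bf g} = {\bf e}_i$ reads off the entries and forces $M = {\bf Id}$, and the second equation does the same for $M^t$. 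Thus each of the five assertions is equivalent to $M = {\bf Id}$, and the lemma follows.

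I do not anticipate a genuine obstacle here: the only point that needs real care is the justification for moving the test vectors through the integral, which rests entirely on the $\mathcal{P}_2$ integrability established at the outset. Everything else is the bilinearity observation $\langle {\bf x}, {\bf g}\rangle\langle {\bf y}, {\bf f}\rangle = {\bf g}^t({\bf x}{\bf y}^t){\bf f}$ together with the uniqueness of the matrix representing a bilinear form.
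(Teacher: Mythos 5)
Your proof is correct and follows essentially the same route as the paper's: both arguments unwind the bilinearity $\langle {\bf x},{\bf g}\rangle\langle {\bf y},{\bf f}\rangle = {\bf g}^t({\bf x}{\bf y}^t){\bf f}$ to identify each condition with the matrix identity $\int {\bf x}{\bf y}^t\,d\gamma = {\bf Id}$ (using that the identity is symmetric to handle the two ``or'' alternatives). Your version is organized slightly more systematically around the single matrix $M$ and adds the Cauchy--Schwarz integrability check that the paper leaves implicit, but there is no substantive difference.
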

\begin{proof}
Since $\nu$ is a probabilistic dual frame of $\mu$ with respect to $\gamma \in \Gamma(\mu, \nu)$, then
 \begin{equation*}
       \int_{\mathbb{R}^n \times \mathbb{R}^n} {\bf x} {\bf y}^t   d\gamma({\bf x,y}) =  \int_{\mathbb{R}^n \times \mathbb{R}^n} {\bf y}{\bf x}^t d\gamma({\bf x,y}) = {\bf Id},
    \end{equation*}
which shows that $(1)$ and $(2)$ are equivalent. Clearly $(2)$ implies $(3)$.  And $(3)$ shows that for any ${\bf g} \in \mathbb{R}^n$, 
\begin{equation*}
   \big \langle {\bf f} - \int_{\mathbb{R}^n \times \mathbb{R}^n} {\bf y} \langle {\bf x}, {\bf f}\rangle d\gamma({\bf x},{\bf y}), {\bf g} \big \rangle =   \langle {\bf f}, {\bf g} \rangle - \int_{\mathbb{R}^n \times \mathbb{R}^n} \langle {\bf y}, {\bf g}\rangle  \langle {\bf x}, {\bf f} \rangle d\gamma({\bf x},{\bf y}) = 0.
\end{equation*}
Therefore, $(2)$ follows. 
\end{proof}

From the above lemma, we know that if $\nu$ is a probabilistic dual frame of $\mu$ with respect to some $\gamma \in \Gamma(\mu, \nu)$, then for any ${\bf f} \in \mathbb{R}^n$, 
    \begin{equation*}
         \Vert {\bf f} \Vert^2 = \int_{\mathbb{R}^n \times \mathbb{R}^n} \langle {\bf f}, {\bf x} \rangle \langle {\bf y}, {\bf f}\rangle d\gamma({\bf x, y}).
    \end{equation*}
Conversely, we show that $\nu$ is a probabilistic dual frame of $\mu$ if the above equality holds in a dense subset of $\mathbb{R}^n$, and the proof is placed in Section \ref{ProofOfLemmas}. 
\begin{proposition}\label{denseProperty}
     Let $\mu$ be a probabilistic frame and $D \subset \mathbb{R}^n$ be dense. If there exists $\nu \in \mathcal{P}_2(\mathbb{R}^n)$ and $ \gamma \in \Gamma(\mu, \nu)$ such that $\int_{\mathbb{R}^n \times \mathbb{R}^n} {\bf x}{\bf y}^t d\gamma({\bf x, y})$ is symmetric and
     \begin{equation*}\label{denseDualProperty}
         \Vert {\bf f} \Vert^2 = \int_{\mathbb{R}^n \times \mathbb{R}^n} \langle {\bf f}, {\bf x} \rangle \langle {\bf y}, {\bf f}\rangle d\gamma({\bf x, y}), \  \text{for any ${\bf f} \in D$},
     \end{equation*}
     then $\nu$ is a probabilistic dual frame of $\mu$ with respect to $\gamma \in \Gamma(\mu, \nu)$.
\end{proposition}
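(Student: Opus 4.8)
We want to upgrade a frame-type identity that holds only on a dense set $D$ to the full probabilistic dual frame condition. The key observation is that by Lemma \ref{DualFrameEquivalence}, it suffices to show that the matrix $\mathbf{M} := \int_{\mathbb{R}^n \times \mathbb{R}^n} \mathbf{x}\mathbf{y}^t \, d\gamma(\mathbf{x},\mathbf{y})$ equals $\mathbf{Id}$. We are already given that $\mathbf{M}$ is symmetric, so the task reduces to identifying a symmetric matrix from the values of its associated quadratic form on a dense set.

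**Key steps.** First I would note that the hypothesis $\|\mathbf{f}\|^2 = \int \langle \mathbf{f}, \mathbf{x}\rangle \langle \mathbf{y}, \mathbf{f}\rangle \, d\gamma(\mathbf{x},\mathbf{y})$ can be rewritten, using the definition of $\mathbf{M}$, as the scalar identity $\|\mathbf{f}\|^2 = \langle \mathbf{M}\mathbf{f}, \mathbf{f}\rangle = \mathbf{f}^t \mathbf{M} \mathbf{f}$ for every $\mathbf{f} \in D$. (One should check the integrability needed to interchange the integral with the bilinear pairing; this follows from $\mu, \nu \in \mathcal{P}_2(\mathbb{R}^n)$ via Cauchy--Schwarz, since $\int \|\mathbf{x}\|\|\mathbf{y}\| \, d\gamma \le \sqrt{M_2(\mu)M_2(\nu)} < \infty$.) The second step is to pass from $D$ to all of $\mathbb{R}^n$: the map $\mathbf{f} \mapsto \mathbf{f}^t \mathbf{M} \mathbf{f} - \|\mathbf{f}\|^2$ is a polynomial, hence continuous, in the coordinates of $\mathbf{f}$, and it vanishes on the dense set $D$, so it vanishes identically. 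Thus $\mathbf{f}^t (\mathbf{M} - \mathbf{Id}) \mathbf{f} = 0$ for all $\mathbf{f} \in \mathbb{R}^n$.

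**Concluding step.** Let $\mathbf{N} := \mathbf{M} - \mathbf{Id}$. Since $\mathbf{M}$ is symmetric by assumption and $\mathbf{Id}$ is symmetric, $\mathbf{N}$ is symmetric, and we have shown $\mathbf{f}^t \mathbf{N} \mathbf{f} = 0$ for every $\mathbf{f}$. A symmetric matrix whose quadratic form vanishes identically is the zero matrix: one recovers the off-diagonal entries from the polarization identity $\mathbf{e}_i^t \mathbf{N} \mathbf{e}_j = \tfrac{1}{2}\big((\mathbf{e}_i + \mathbf{e}_j)^t \mathbf{N} (\mathbf{e}_i + \mathbf{e}_j) - \mathbf{e}_i^t \mathbf{N} \mathbf{e}_i - \mathbf{e}_j^t \mathbf{N} \mathbf{e}_j\big) = 0$, so every entry of $\mathbf{N}$ is zero. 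Hence $\mathbf{M} = \mathbf{Id}$, and by Lemma \ref{DualFrameEquivalence} this is exactly condition $(1)$, so $\nu$ is a probabilistic dual frame of $\mu$ with respect to $\gamma$.

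**Main obstacle.** The only subtlety I anticipate is the role of the symmetry hypothesis on $\mathbf{M}$. Without it, the vanishing of the quadratic form would only force the symmetric part $\tfrac{1}{2}(\mathbf{M} + \mathbf{M}^t)$ to equal $\mathbf{Id}$, leaving the antisymmetric part undetermined, so $\mathbf{M}$ need not equal $\mathbf{Id}$. The explicit assumption that $\mathbf{M}$ is symmetric is precisely what closes this gap, and I would flag this as the essential use of that hypothesis. Everything else is the routine density-plus-polarization argument sketched above.
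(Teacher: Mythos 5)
Your proof is correct, and it takes a genuinely cleaner route than the paper's. The paper polarizes first on the dense set $D$ --- writing $\langle {\bf f},{\bf g}\rangle = \tfrac12(\Vert{\bf f}+{\bf g}\Vert^2 - \Vert{\bf f}\Vert^2-\Vert{\bf g}\Vert^2)$ and applying the hypothesis to ${\bf f}$, ${\bf g}$, and ${\bf f}+{\bf g}$ --- and then extends the resulting bilinear identity from $D\times D$ to $\mathbb{R}^n\times\mathbb{R}^n$ by a sequential limit argument, justifying the interchange of limit and integral by checking that the relevant integral operators are bounded. You instead collapse everything onto the single matrix ${\bf M}=\int_{\mathbb{R}^n\times\mathbb{R}^n} {\bf x}{\bf y}^t\,d\gamma({\bf x},{\bf y})$ (well defined by Cauchy--Schwarz, as you note), observe that the hypothesis is exactly ${\bf f}^t{\bf M}{\bf f}=\Vert{\bf f}\Vert^2$ on $D$, extend this to all of $\mathbb{R}^n$ by continuity of a polynomial, and only then polarize; symmetry of ${\bf M}$ then forces ${\bf M}={\bf Id}$, and Lemma \ref{DualFrameEquivalence} finishes. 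This buys two things. First, the analytic content is reduced to a one-line integrability check, with no operator-boundedness or limit-interchange argument needed. Second, and more substantively, your order of operations (extend first, polarize second) never applies the hypothesis at the point ${\bf f}+{\bf g}$, which need not lie in $D$ for a general dense set; the paper's polarization-on-$D$ step implicitly assumes it does. Your closing remark on the role of the symmetry hypothesis --- that without it only the symmetric part of ${\bf M}$ is pinned down --- identifies exactly where the paper uses that assumption as well.
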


Recall that given a measure space $(\Omega, \mu)$, the \textit{essential supremum} of a measurable function $f: \Omega \rightarrow \mathbb{R}$ with respect to $\mu$, denoted by $\mu$-$\text{esssup} \ f$, is the smallest number $\alpha$ such that the set $\{x \in \Omega: f(x)>\alpha\}$ 
has measure zero, and if no such number exists, the essential supremum is taken to be $+\infty$.
By the linearity of matrix trace, we have the following equality and essential supremum inequality that play a role in obtaining the lower bound for the probabilistic dual frame potential.
\begin{lemma}\label{tracelemma}
      Let $\mu$ be a probabilistic frame  and   $\nu$ a probabilistic dual frame to $\mu$ with respect to $\gamma \in \Gamma(\mu, \nu)$. Then 
      \begin{equation*}
         \gamma {\text -} \mathrm{esssup} \ \{ \langle {\bf x, y} \rangle: {\bf x}, {\bf y} \in \mathbb{R}^{n} \} \geq \int_{\mathbb{R}^n \times \mathbb{R}^n} \langle {\bf x, y} \rangle d\gamma({\bf x, y}) = n,
      \end{equation*}
and the equality holds if and only if for $\gamma$ almost all $({\bf x}, {\bf y}) \in \mathbb{R}^n \times \mathbb{R}^n$, $\langle {\bf x, y} \rangle  = n$.
Furthermore, if $p \geq 1$, then  
      \begin{equation*}
         \gamma {\text -} \mathrm{esssup} \ \{ |\langle {\bf x, y} \rangle |^p: {\bf x}, {\bf y} \in \mathbb{R}^{n} \} \geq \int_{\mathbb{R}^n \times \mathbb{R}^n} |\langle {\bf x, y} \rangle | ^p d\gamma({\bf x, y}) \geq n^p,
      \end{equation*}
and when $p>1$, the equality $\gamma {\text -} \mathrm{esssup} \ \{ |\langle {\bf x, y} \rangle |^p: {\bf x}, {\bf y} \in \mathbb{R}^{n} \} = n^p $, or the last equality holds, if and only if  for $\gamma$ almost all $({\bf x}, {\bf y}) \in \mathbb{R}^n \times \mathbb{R}^n$, $\langle {\bf x, y} \rangle  = n$. 
\end{lemma}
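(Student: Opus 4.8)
The plan is to reduce the entire lemma to a single scalar identity obtained by tracing the defining matrix equation, and then to combine that identity with two elementary facts: that averaging against a probability measure never exceeds the essential supremum, and Jensen's inequality for the convex map $t \mapsto |t|^p$.

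First I would take the trace of the defining relation $\int_{\R^n \times \R^n} {\bf x}{\bf y}^t \, d\gamma = {\bf Id}$. Since $\tr({\bf x}{\bf y}^t) = \langle {\bf x}, {\bf y}\rangle$, and each entry $x_i y_j$ is $\gamma$-integrable — by Cauchy--Schwarz, $\int |\langle {\bf x},{\bf y}\rangle|\, d\gamma \le \big(M_2(\mu)\, M_2(\nu)\big)^{1/2} < \infty$ because $\mu, \nu \in \mathcal{P}_2(\R^n)$ and the marginals of $\gamma$ are $\mu$ and $\nu$ — the linearity of the trace lets me pass it through the integral to obtain the central identity
\[
   \int_{\R^n \times \R^n} \langle {\bf x}, {\bf y}\rangle \, d\gamma = \tr({\bf Id}) = n.
\]
This is the one equality on which the whole lemma hangs. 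For the first displayed chain, the inequality $\gamma\text{-esssup}\,\langle {\bf x},{\bf y}\rangle \ge \int \langle {\bf x},{\bf y}\rangle\, d\gamma$ is exactly the statement that the average against a probability measure does not exceed the essential supremum, and combined with the identity above this pins the value at $n$. For its equality case: if the esssup equals $n$, then $n - \langle {\bf x},{\bf y}\rangle \ge 0$ holds $\gamma$-a.e.\ while its integral vanishes, forcing $\langle {\bf x},{\bf y}\rangle = n$ for $\gamma$-a.e.\ $({\bf x},{\bf y})$; the converse is immediate.

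For the $p \ge 1$ statement, the lower bound $\int |\langle {\bf x},{\bf y}\rangle|^p \, d\gamma \ge n^p$ is Jensen's inequality applied to the convex function $t \mapsto |t|^p$ against the probability measure $\gamma$, using once more that $\int \langle {\bf x},{\bf y}\rangle\, d\gamma = n$; the accompanying esssup bound is the same probability-measure estimate as before. The step I expect to be the crux of the equality analysis is the characterization for $p > 1$: here $t \mapsto |t|^p$ is \emph{strictly} convex, so Jensen is an equality precisely when $\langle {\bf x},{\bf y}\rangle$ is $\gamma$-a.e.\ equal to a constant, and since its integral is $n$ that constant must be $n$. Finally, to close the loop on the two equalities, I would note that if $\gamma\text{-esssup}\,|\langle {\bf x},{\bf y}\rangle|^p = n^p$ then the sandwiched integral also equals $n^p$, returning us to the Jensen equality case, while conversely $\langle {\bf x},{\bf y}\rangle = n$ a.e.\ makes every quantity in the chain equal to $n^p$. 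Thus both the esssup equality and the integral equality are equivalent to $\langle {\bf x},{\bf y}\rangle = n$ holding $\gamma$-a.e. The main obstacle is purely bookkeeping — verifying integrability so that the trace passes through the integral, and invoking strict convexity with the correct constant — rather than any substantive difficulty.
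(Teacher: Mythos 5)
Your proposal is correct and follows essentially the same route as the paper: trace the defining identity to get $\int \langle {\bf x},{\bf y}\rangle\, d\gamma = n$, bound the integral by the essential supremum, and apply Jensen's inequality with strict convexity of $|\cdot|^p$ for $p>1$ to characterize equality. Your handling of the equality cases (integrating the nonnegative function $n - \langle {\bf x},{\bf y}\rangle$, and sandwiching the integral when the esssup equals $n^p$) is a minor, equally valid variant of the paper's phrasing, and your explicit integrability check is a welcome addition.
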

\begin{proof}
  Since $\nu$ is a probabilistic dual frame to $\mu$ with respect to $\gamma \in \Gamma(\mu, \nu)$, then
  \begin{equation*}
       \int_{\mathbb{R}^n \times \mathbb{R}^n} {\bf x} {\bf y}^t   d\gamma({\bf x,y}) = {\bf Id}.
    \end{equation*}
    Therefore, by taking trace on both sides, we have
    \begin{equation*}
       n= \trace({\bf Id}) =  \int_{\mathbb{R}^n \times \mathbb{R}^n} \trace({\bf x} {\bf y}^t ) d\gamma({\bf x,y}) = \int_{\mathbb{R}^n \times \mathbb{R}^n} \langle {\bf x}, {\bf y} \rangle d\gamma({\bf x,y}).
    \end{equation*}
    Clearly, we have 
        \begin{equation*}
          \gamma {\text -} \text{esssup} \ \{ \langle {\bf x, y} \rangle: {\bf x}, {\bf y} \in \mathbb{R}^{n} \} \geq \int_{\mathbb{R}^n \times \mathbb{R}^n} \langle {\bf x, y} \rangle d\gamma({\bf x, y}) = n,
      \end{equation*}
      and the equality holds if and only if for $\gamma$ almost all $({\bf x}, {\bf y}) \in \mathbb{R}^n \times \mathbb{R}^n$, $\langle {\bf x, y} \rangle$ is a constant, which is true if and only if the constant is $n$.
    Since $\vert \cdot \vert^p: \mathbb{R} \rightarrow \mathbb{R}$ is convex where $p \geq 1$, then by Jensen's inequality, 
     \begin{equation*}
           \int_{\mathbb{R}^n \times \mathbb{R}^n} |\langle {\bf x, y} \rangle | ^p d\gamma({\bf x, y})  \geq \Big \vert \int_{\mathbb{R}^n \times \mathbb{R}^n} \langle {\bf x, y} \rangle d\gamma({\bf x, y}) \Big \vert ^p = n^p,
      \end{equation*}
and when $p >1$ ($\vert \cdot \vert^p: \mathbb{R} \rightarrow \mathbb{R}$ is strictly convex), the equality holds if and only if for $\gamma$ almost all $({\bf x}, {\bf y}) \in \mathbb{R}^n \times \mathbb{R}^n$, $\langle {\bf x, y} \rangle$ is a constant (equal to $n$). For $p \geq 1$, we further have 
\begin{equation*}
         \gamma {\text -} \text{esssup} \ \{ |\langle {\bf x, y} \rangle |^p: {\bf x}, {\bf y} \in \mathbb{R}^{n} \} \geq \int_{\mathbb{R}^n \times \mathbb{R}^n} |\langle {\bf x, y} \rangle | ^p d\gamma({\bf x, y}) \geq n^p, 
\end{equation*}
and the first equality holds if and only if for $\gamma$ almost all $({\bf x}, {\bf y}) \in \mathbb{R}^n \times \mathbb{R}^n$, $|\langle {\bf x, y} \rangle|$ is constant. Then combining with the last equality condition, we get the desired condition for $\gamma {\text -} \text{esssup} \ \{ |\langle {\bf x, y} \rangle |^p: {\bf x}, {\bf y} \in \mathbb{R}^{n} \} = n^p $ where $p>1$.
\end{proof}

Lemma \ref{tracelemma} shows an analogous result to Proposition 24 in \cite{aceska2022cross}, claiming that if $\{{\bf f}_i\}_{i=1}^N$ is a frame for $\mathbb{R}^n$ and $\{{\bf g}_i\}_{i=1}^N$ is a dual frame of $\{{\bf f}_i\}_{i=1}^N$, then 
\begin{equation*}
    \sum_{i=1}^N |\langle {\bf f}_i, {\bf g}_i \rangle | ^2 \geq \frac{n^2}{N},
\end{equation*}
and the equality holds if and only if $\langle {\bf f}_i, {\bf g}_i \rangle = \frac{n}{N}$, for each $i$. Indeed, if $\mu = \frac{1}{N}\sum_{i=1}^N \delta_{{\bf f}_i}$ is a probabilistic frame and $\nu = \frac{1}{N}\sum_{i=1}^N \delta_{{\bf h}_i}$ a dual frame of $\mu$ with respect to $({\bf Id}, T)_\# \mu \in \Gamma(\mu, \nu)$ where $T:\mathbb{R}^n \rightarrow \mathbb{R}^n$ is such that  $T({\bf f}_i)= {\bf h}_i$ for each $i$, that is to say, $T_\#\mu = \nu$.  Then by Lemma \ref{tracelemma}, 
\begin{equation*}
\begin{split}
       \int_{\mathbb{R}^n \times \mathbb{R}^n} |\langle {\bf x, y} \rangle | ^2 d\gamma({\bf x, y}) = \frac{1}{N} \sum_{i=1}^N |\langle {\bf f}_i, {\bf h}_i \rangle | ^2 
       &\geq \Big \vert \int_{\mathbb{R}^n \times \mathbb{R}^n} \langle {\bf x, y} \rangle d\gamma({\bf x, y}) \Big \vert ^2 =n^2, 
\end{split}  
\end{equation*}
and the equality holds if and only if $\langle {\bf f}_i, {\bf h}_i \rangle = n$, for each $i$. 
For the remaining part of this section, we will focus on a particular type of probabilistic dual frame, namely those that arise as pushforwards of measurable maps.  
\begin{definition}
    Let $\mu$ be a probabilistic frame and let $T: \mathbb{R}^n \rightarrow \mathbb{R}^n$ be measurable such that $T_{\#}\mu \in \mathcal{P}_2(\mathbb{R}^n)$. Then $T_{\#}\mu$ is called a probabilistic dual frame of pushforward type for $\mu$ if 
    \begin{equation*}
        \int_{\mathbb{R}^n} {\bf x} T({\bf x})^t d\mu({\bf x}) = {\bf Id}.
    \end{equation*}
\end{definition}

Indeed, $T_{\#}\mu$ is a dual frame of $\mu$ under $\gamma:=({\bf Id}, T)_{\#}\mu \in \Gamma(\mu, T_{\#}\mu)$, since
    \begin{equation*}
        \int_{\mathbb{R}^n \times \mathbb{R}^n} {\bf x} {\bf y}^t d\gamma({\bf x},{\bf y}) = \int_{\mathbb{R}^n} {\bf x} T({\bf x})^t d\mu({\bf x}) = {\bf Id}.
    \end{equation*}
Clearly, the canonical probabilistic dual ${{\bf S}_\mu^{-1}}_\#\mu$ is of pushforward type. In \cite{wickman2017duality}, the authors showed that given a probabilistic frame $\mu$, ${\psi_h}_\#\mu$ is a probabilistic dual frame of $\mu$, where $h:\mathbb{R}^n \rightarrow \mathbb{R}^n$ is such that $h_{\#}\mu \in \mathcal{P}_2(\mathbb{R}^n)$ and $\psi_h:\mathbb{R}^n \rightarrow \mathbb{R}^n$ is
$$\psi_h({\bf x}) = {{\bf S}_\mu^{-1}}{\bf x}+ h({\bf x}) - \int_{\mathbb{R}^n} \langle {\bf S}_\mu^{-1}{\bf x}, {\bf y} \rangle h({\bf y}) d\mu({\bf y}), \ {\bf x}  \in \mathbb{R}^n.$$
Furthermore, the authors in \cite{chen2025probabilistic} showed that all probabilistic dual frames of pushforward type can be characterized by this construction. However, not every probabilistic dual frame is of pushforward type. For example, $\nu = \frac{1}{2}\delta_{\frac{1}{2}} + \frac{1}{2}\delta_{\frac{3}{2}}$ is a probabilistic dual frame of $\delta_1$ with respect to the product measure $\delta_1 \otimes \nu \in \Gamma(\delta_1, \nu)$, but there does not exist a map $M: \mathbb{R} \rightarrow \mathbb{R}$ such that $\nu = M_\# \delta_1$.

\begin{corollary}[Proposition 5.8 in \cite{chen2025probabilistic}]\label{pushforwardDuals}
      Let $\mu$ be a probabilistic frame and the map $T:\mathbb{R}^n \rightarrow \mathbb{R}^n$ measurable. If $T_\# \mu$ is a probabilistic dual frame of $\mu$, then $T:\mathbb{R}^n \rightarrow \mathbb{R}^n$ precisely satisfies that for any ${\bf x} \in \mathbb{R}^n$, 
      \begin{equation*}
          T({\bf x}) = {{\bf S}_\mu^{-1}}{\bf x} + h({\bf x}) - \int_{\mathbb{R}^n} \langle {\bf S}_\mu^{-1}{\bf x}, {\bf y} \rangle h({\bf y}) d\mu({\bf y}),
      \end{equation*}
where $h:\mathbb{R}^n \rightarrow \mathbb{R}^n$ is such that $h_\# \mu \in \mathcal{P}_2(\mathbb{R}^n)$. 
\end{corollary}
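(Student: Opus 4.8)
The plan is to exhibit the map $h$ explicitly rather than to solve for it, and the key realization is that one may simply take $h = T$ itself. Since $T_{\#}\mu$ is a probabilistic dual frame of $\mu$, it lies in $\mathcal{P}_2(\mathbb{R}^n)$ by definition, so $h_{\#}\mu = T_{\#}\mu \in \mathcal{P}_2(\mathbb{R}^n)$ and $h = T$ is an admissible choice for the construction; no separate integrability condition needs verification.

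First I would record the operative relation. Since the dual $T_{\#}\mu$ is produced by the pushforward map $T$, the relevant coupling is the graph coupling $({\bf Id}, T)_{\#}\mu \in \Gamma(\mu, T_{\#}\mu)$, so that $T_{\#}\mu$ being a dual of pushforward type means
\[
\int_{\mathbb{R}^n} {\bf x}\, T({\bf x})^t\, d\mu({\bf x}) = {\bf Id}.
\]
Taking transposes, and using that ${\bf Id}^t = {\bf Id}$, yields the companion identity $\int_{\mathbb{R}^n} T({\bf y}){\bf y}^t\, d\mu({\bf y}) = {\bf Id}$, which is precisely the quantity that will make the correction term in $\psi_h$ collapse.

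Next I would substitute $h = T$ into the defining formula for $\psi_h$ and simplify. Writing the scalar $\langle {\bf S}_\mu^{-1}{\bf x}, {\bf y}\rangle = {\bf y}^t {\bf S}_\mu^{-1}{\bf x}$ (legitimate because ${\bf S}_\mu^{-1}$ is symmetric) and pulling the matrix factor out of the integral, the last term becomes
\[
\int_{\mathbb{R}^n} \langle {\bf S}_\mu^{-1}{\bf x}, {\bf y}\rangle\, T({\bf y})\, d\mu({\bf y}) = \left(\int_{\mathbb{R}^n} T({\bf y}){\bf y}^t\, d\mu({\bf y})\right) {\bf S}_\mu^{-1}{\bf x} = {\bf S}_\mu^{-1}{\bf x},
\]
by the companion identity. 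Hence $\psi_T({\bf x}) = {\bf S}_\mu^{-1}{\bf x} + T({\bf x}) - {\bf S}_\mu^{-1}{\bf x} = T({\bf x})$ for every ${\bf x} \in \mathbb{R}^n$, which is exactly the asserted representation with this choice of $h$.

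There is no genuinely hard analytic step here; the argument is algebraic bookkeeping once the correct $h$ is identified. The only points demanding care are (i) recognizing that the natural ansatz is $h = T$ rather than an unknown auxiliary map; (ii) keeping the matrix transposes straight and invoking the symmetry of ${\bf S}_\mu^{-1}$ when rewriting the inner product as ${\bf y}^t {\bf S}_\mu^{-1}{\bf x}$; and (iii) observing that the resulting equality holds for \emph{every} ${\bf x}$, not merely $\mu$-almost everywhere, since once $\int_{\mathbb{R}^n} T({\bf y}){\bf y}^t\, d\mu = {\bf Id}$ the cancellation is a pointwise algebraic identity in ${\bf x}$. I would close by noting that this, together with the fact (recalled earlier in the section) that every $\psi_h$ is a pushforward-type dual, shows the construction $\psi_h$ parametrizes exactly the pushforward-type duals of $\mu$.
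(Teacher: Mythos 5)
Your argument is correct, and it is worth noting that the paper gives no internal proof of this corollary at all --- it is imported as Proposition 5.8 of \cite{chen2025probabilistic} --- so your self-contained verification is a genuine addition rather than a rederivation. The key move, taking $h=T$ and watching the correction term collapse via
\[
\int_{\mathbb{R}^n} \langle {\bf S}_\mu^{-1}{\bf x}, {\bf y}\rangle\, T({\bf y})\, d\mu({\bf y}) = \Bigl(\int_{\mathbb{R}^n} T({\bf y}){\bf y}^t\, d\mu({\bf y})\Bigr) {\bf S}_\mu^{-1}{\bf x} = {\bf S}_\mu^{-1}{\bf x},
\]
is clean, and the admissibility of $h=T$ is immediate since $T_\#\mu\in\mathcal{P}_2(\mathbb{R}^n)$ by the definition of a probabilistic dual frame. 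One point deserves explicit mention rather than the passing treatment you give it: your proof requires the identity $\int_{\mathbb{R}^n}{\bf x}\,T({\bf x})^t\,d\mu({\bf x})={\bf Id}$, i.e.\ that $T_\#\mu$ is a dual \emph{of pushforward type} (dual with respect to the graph coupling $({\bf Id},T)_\#\mu$), whereas the corollary's hypothesis as literally written only asserts that $T_\#\mu$ is a probabilistic dual frame, which by definition requires only that \emph{some} coupling in $\Gamma(\mu,T_\#\mu)$ witnesses duality. The stronger reading is clearly the intended one --- the corollary sits inside the discussion of pushforward-type duals, and a short computation shows that every $\psi_h$ satisfies $\int_{\mathbb{R}^n}{\bf x}\,\psi_h({\bf x})^t\,d\mu({\bf x})={\bf Id}$, so the conclusion would be false under the weaker hypothesis --- but you should state that you are invoking it, since the whole proof hinges on that one matrix identity.
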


In what follows, we prove that any Bessel probability measure can be transformed into a tight probabilistic frame. Then we show that probabilistic dual frames of pushforward type can be used to characterize tight probabilistic frames. 

\begin{lemma} \label{Bessel_To_Tight}
    Let $\mu$ be a Bessel probability measure with bound $B>0$. Then for any real number $k \geq \frac{1}{2}$, there exists $\nu_k \in \mathcal{P}_2(\mathbb{R}^n)$ such that $\frac{1}{2}\mu+\frac{1}{2}\nu_k$ is a tight probabilistic frame with bound $kB>0$. In particular, there exists $\nu \in \mathcal{P}_2(\mathbb{R}^n)$ such that $\frac{1}{2}\mu+\frac{1}{2}\nu$ is a tight probabilistic frame with bound $B>0$.
\end{lemma}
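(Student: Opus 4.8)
The plan is to reduce the claim to a statement purely about frame operators. Since the frame operator is additive over mixtures, the frame operator of $\frac{1}{2}\mu + \frac{1}{2}\nu_k$ is $\frac{1}{2}{\bf S}_\mu + \frac{1}{2}{\bf S}_{\nu_k}$. By Proposition \ref{TAcharacterization}$(2)$, this mixture is a tight probabilistic frame with bound $kB>0$ if and only if this operator equals $kB\,{\bf Id}$. Rearranging, what I must produce is a measure $\nu_k \in \mathcal{P}_2(\mathbb{R}^n)$ whose frame operator is the prescribed matrix
\begin{equation*}
{\bf M} := 2kB\,{\bf Id} - {\bf S}_\mu.
\end{equation*}
Thus the entire problem becomes: realize ${\bf M}$ as the second-moment matrix of some probability measure in $\mathcal{P}_2(\mathbb{R}^n)$.

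Next I would verify that ${\bf M}$ is positive semidefinite, which is exactly where the hypotheses $k \geq \frac{1}{2}$ and the Bessel bound enter. The Bessel condition with bound $B$ says $\int_{\mathbb{R}^n} |\langle {\bf x}, {\bf y}\rangle|^2 d\mu({\bf y}) = {\bf x}^t {\bf S}_\mu {\bf x} \le B\|{\bf x}\|^2$ for all ${\bf x} \in \mathbb{R}^n$, i.e. ${\bf S}_\mu \preceq B\,{\bf Id}$. Since $k \geq \frac{1}{2}$ gives $2kB \geq B$, we get ${\bf S}_\mu \preceq B\,{\bf Id} \preceq 2kB\,{\bf Id}$, so ${\bf M} \succeq {\bf 0}_{n\times n}$. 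The threshold $k = \frac{1}{2}$ is sharp in a transparent way: if $\mu$ is already tight with bound $B$, then ${\bf M} = {\bf 0}_{n\times n}$ and the construction below degenerates to $\nu_k = \delta_{\bf 0}$, which is still admissible.

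The main constructive step is then to exhibit a probability measure with second-moment matrix equal to an arbitrary positive semidefinite matrix ${\bf M}$. I would diagonalize ${\bf M} = \sum_{i=1}^n \lambda_i {\bf e}_i {\bf e}_i^t$ with $\lambda_i \geq 0$ and $\{{\bf e}_i\}_{i=1}^n$ orthonormal, and set
\begin{equation*}
\nu_k := \frac{1}{n} \sum_{i=1}^n \delta_{\sqrt{n\lambda_i}\,{\bf e}_i}.
\end{equation*}
A direct computation gives ${\bf S}_{\nu_k} = \frac{1}{n}\sum_{i=1}^n (n\lambda_i)\,{\bf e}_i{\bf e}_i^t = {\bf M}$, and $\nu_k$ is finitely supported, hence lies in $\mathcal{P}_2(\mathbb{R}^n)$. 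By the reduction above, $\frac{1}{2}\mu + \frac{1}{2}\nu_k$ then has frame operator $kB\,{\bf Id}$, so Proposition \ref{TAcharacterization}$(2)$ makes it a tight probabilistic frame with bound $kB$. Specializing to $k = 1$ yields the final assertion, with bound $B$.

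Because every step is either elementary linear algebra or an explicit construction, I do not anticipate a serious obstacle. The only points requiring genuine care are confirming ${\bf M} \succeq {\bf 0}_{n \times n}$ (which is precisely what $k \ge \frac{1}{2}$ buys) and checking that the construction survives zero eigenvalues: when some $\lambda_i = 0$, the corresponding atom is simply placed at the origin and contributes the zero matrix, so no difficulty arises. One could equally use a symmetrized measure $\frac{1}{2}(\delta_{{\bf v}_i} + \delta_{-{\bf v}_i})$ to additionally arrange mean zero, but this is not needed for the statement.
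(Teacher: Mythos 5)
Your proof is correct, and every step checks out: the additivity ${\bf S}_{\frac12\mu+\frac12\nu_k}=\frac12{\bf S}_\mu+\frac12{\bf S}_{\nu_k}$, the positive semidefiniteness of ${\bf M}=2kB\,{\bf Id}-{\bf S}_\mu$ from ${\bf S}_\mu\preceq B\,{\bf Id}\preceq 2kB\,{\bf Id}$, and the computation ${\bf S}_{\nu_k}={\bf M}$ for your discrete measure are all sound, and the degenerate case $\lambda_i=0$ is handled. Your route shares the paper's key insight --- the whole point is to realize the ``deficiency'' matrix $2kB\,{\bf Id}-{\bf S}_\mu$ as a frame operator, which is exactly where $k\ge\frac12$ enters in both arguments --- but the execution differs in two ways. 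First, the paper constructs $\nu_k$ as $\bigl(B{\bf Id}-\frac{1}{2k}{\bf S}_\mu\bigr)^{1/2}_{\#}\eta_k$ for an auxiliary tight probabilistic frame $\eta_k$ with bound $2k$ (whose frame operator is then ${\bf U}{\bf S}_{\eta_k}{\bf U}^t=2kB\,{\bf Id}-{\bf S}_\mu$ for ${\bf U}$ the square root), whereas you build an explicit finitely supported measure from the eigendecomposition of ${\bf M}$; yours is more elementary and self-contained, while the paper's pushforward construction gives a whole family of valid $\nu_k$'s (one per choice of $\eta_k$) and avoids diagonalization. Second, the paper verifies tightness by a pointwise reconstruction identity $B\|{\bf x}\|^2=\frac{1}{2k}\int|\langle{\bf x},{\bf y}\rangle|^2d\mu+\frac{1}{2k}\int|\langle{\bf x},{\bf y}\rangle|^2d\nu_k$, whereas you invoke the operator-level characterization of Proposition \ref{TAcharacterization}$(2)$ directly; your reduction is cleaner and makes the logical structure (prescribe a PSD matrix, realize it as a second-moment matrix) more transparent.
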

\begin{proof}
    Let ${\bf S}_\mu$ be the frame operator of $\mu$. Since $\mu$ is Bessel, then ${\bf S}_\mu \leq B {\bf Id}$ and thus $B {\bf Id} - \frac{1}{2k}{\bf S}_\mu$ is positive semi-definite where $k \geq \frac{1}{2}$. By the spectral theorem, $B {\bf Id} - \frac{1}{2k}{\bf S}_\mu$ has a square root $(B {\bf Id} - \frac{1}{2k}{\bf S}_\mu)^{1/2}$. Then, for any ${\bf x} \in \mathbb{R}^n$, 
    $$B{\bf x} =  \frac{1}{2k} {\bf S}_\mu {\bf x} + (B {\bf Id} - \frac{1}{2k}{\bf S}_\mu)^{1/2} (B {\bf Id} - \frac{1}{2k}{\bf S}_\mu)^{1/2}{\bf x}. $$
    Now let $\eta_k \in \mathcal{P}_2(\mathbb{R}^n)$ be any known tight frame with bound $2k$, then 
    $$(B {\bf Id} - \frac{1}{2k}{\bf S}_\mu)^{1/2}{\bf x} =\frac{1}{2k} \int_{\mathbb{R}^n} {\bf y} \big \langle (B {\bf Id} - \frac{1}{2k}{\bf S}_\mu)^{1/2}{\bf x}, {\bf y} \big \rangle d\eta_k({\bf y}).$$
    Therefore, 
     $$ B{\bf x} = \frac{1}{2k} {\bf S}_\mu {\bf x} +  \frac{1}{2k} (B {\bf Id} - \frac{1}{2k}{\bf S}_\mu)^{1/2} \int_{\mathbb{R}^n} {\bf y} \big \langle (B {\bf Id} - \frac{1}{2k}{\bf S}_\mu)^{1/2}{\bf x}, {\bf y} \big \rangle d\eta_k({\bf y}). $$
     Taking inner product with ${\bf x}$ leads to 
     $$ B\|{\bf x}\|^2 =  \frac{1}{2k} \int_{\mathbb{R}^n} |\langle {\bf x}, {\bf y} \rangle|^2 d\mu({\bf y}) +  \frac{1}{2k} \int_{\mathbb{R}^n} \big | \langle {\bf x}, (B {\bf Id} - \frac{1}{2k}{\bf S}_\mu)^{1/2} {\bf y}  \rangle \big|^2 d\eta_k({\bf y}). $$
     Now let $\nu_k := {(B {\bf Id} - \frac{1}{2k}{\bf S}_\mu)^{1/2}}_\#\eta_k \in \mathcal{P}_2(\mathbb{R}^n)$, then for any ${\bf x} \in \mathbb{R}^n$, 
     $$ kB\|{\bf x}\|^2 =  \int_{\mathbb{R}^n} |\langle {\bf x}, {\bf y} \rangle|^2 d \frac{\mu + \nu_k}{2}({\bf y}), $$
     which implies that $\frac{1}{2}\mu+\frac{1}{2}\nu_k$ is a tight probabilistic frame with bound $kB$. We get the last result by letting $k=1$. 
\end{proof}

Note that the restriction $k \ge \tfrac12$ in Lemma \ref{Bessel_To_Tight} is necessary.  For the construction of $\nu_k$ to work, the matrix $B{\bf Id} - \frac{1}{2k}{\bf S}_\mu$ must be positive semi-definite so that one can take its square root to define $\nu_k := {(B {\bf Id} - \frac{1}{2k}{\bf S}_\mu)^{1/2}}_\#\eta_k$. 
Since $\mu$ is Bessel with bound $B$, then ${\bf S}_\mu \le B{\bf Id}$. 
To make $B{\bf Id} - \tfrac{1}{2k}{\bf S}_\mu$ positive semi-definite, we should have $k \ge \frac12$. If $k < \tfrac12$, $B{\bf Id} - \frac{1}{2k}{\bf S}_\mu$ may have negative eigenvalues and its square root may \emph{not} exist,  
which makes the construction of $\nu_k$ impossible. We then have the following characterization of tight probabilistic frames. 

\begin{lemma}
    Let $\mu$ be a probabilistic frame and $k>0$. The following are equivalent:
    \begin{itemize}
        \item[$(1).$] $\mu$ is a tight probabilistic frame with frame bound $\frac{1}{k}$.
        \item[$(2).$] $(k\,\mathrm{Id})_{\#}\mu$ is a probabilistic dual frame of pushforward type for $\mu$.
    \end{itemize}
\end{lemma}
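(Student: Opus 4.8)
The plan is to reduce both statements to a single matrix identity for the frame operator ${\bf S}_\mu$ and then invoke the characterization of tight probabilistic frames in Proposition \ref{TAcharacterization}(2). The key observation is that the map $T = k\,\mathrm{Id}$, that is $T({\bf x}) = k{\bf x}$, is precisely the candidate whose pushforward-dual defining integral computes a scalar multiple of ${\bf S}_\mu$.

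First I would dispense with the harmless integrability requirement hidden in statement $(2)$. Since $\mu$ is a probabilistic frame, Proposition \ref{TAcharacterization}(1) gives $\mu \in \mathcal{P}_2(\mathbb{R}^n)$, and the linear map $k\,\mathrm{Id}$ satisfies $M_2\big((k\,\mathrm{Id})_{\#}\mu\big) = k^2 M_2(\mu) < \infty$, so $(k\,\mathrm{Id})_{\#}\mu \in \mathcal{P}_2(\mathbb{R}^n)$ automatically. Thus the only genuine content of $(2)$ is the defining equation of a probabilistic dual frame of pushforward type, namely
\begin{equation*}
\int_{\mathbb{R}^n} {\bf x}\,(k{\bf x})^t\, d\mu({\bf x}) = {\bf Id}.
\end{equation*}

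Next I would evaluate this integral directly. By linearity of the integral and the definition of the frame operator,
\begin{equation*}
\int_{\mathbb{R}^n} {\bf x}\,(k{\bf x})^t\, d\mu({\bf x}) = k\int_{\mathbb{R}^n} {\bf x}{\bf x}^t\, d\mu({\bf x}) = k\,{\bf S}_\mu.
\end{equation*}
Hence condition $(2)$ holds if and only if $k\,{\bf S}_\mu = {\bf Id}$, equivalently ${\bf S}_\mu = \tfrac{1}{k}{\bf Id}$.

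Finally, I would close the loop with Proposition \ref{TAcharacterization}(2), which asserts that $\mu$ is a tight probabilistic frame with bound $\tfrac{1}{k} > 0$ if and only if ${\bf S}_\mu = \tfrac{1}{k}{\bf Id}$. Combining the two reductions, statements $(1)$ and $(2)$ are each equivalent to the single identity ${\bf S}_\mu = \tfrac{1}{k}{\bf Id}$, and therefore to one another. I do not expect any real obstacle here: the argument is a one-line computation of $k\,{\bf S}_\mu$ together with the cited characterization, and the only point meriting a word of care is the verification of the second-moment condition that makes $(k\,\mathrm{Id})_{\#}\mu$ a legitimate element of $\mathcal{P}_2(\mathbb{R}^n)$.
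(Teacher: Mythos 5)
Your proposal is correct and follows essentially the same route as the paper: both reduce condition $(2)$ to the identity $k\,{\bf S}_\mu = {\bf Id}$ and then invoke the characterization of tight probabilistic frames via ${\bf S}_\mu = \tfrac{1}{k}{\bf Id}$. The only cosmetic difference is that the paper phrases the forward implication as recognizing $(k\,\mathrm{Id})_{\#}\mu$ as the canonical dual, whereas you handle both directions through the single matrix computation; your added remark on the second-moment condition is a harmless extra detail.
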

\begin{proof}
    $(1) \implies (2)$ follows by letting the probabilistic dual frame  be the canonical dual of $\mu$. Conversely, if $(2)$ holds, then 
\begin{equation*}
     k\int_{\mathbb{R}^n} {\bf x}{\bf x}^t d\mu({\bf x}) = {\bf Id}.
\end{equation*}
Therefore, ${\bf S}_\mu = \frac{1}{k}{\bf Id}$ and thus $\mu$ is a tight probabilistic frame with bound $\frac{1}{k}$.  
\end{proof}

We have the following trace equality for probabilistic dual frames of pushforward type by Lemma \ref{tracelemma}.  
\begin{lemma}\label{traceEquality}
      Let $\mu$ be a probabilistic frame and $T_{\#}\mu$ a probabilistic dual frame for $\mu$ where $T:\mathbb{R}^n \rightarrow \mathbb{R}^n$ is measurable. Then
       \begin{equation*}
          \int_{\mathbb{R}^n} \langle {\bf x}, T({\bf x}) \rangle d\mu({\bf x})  =  \int_{\mathbb{R}^n} \Vert {\bf S}_\mu^{-1/2} {\bf x} \Vert^2 d\mu({\bf x}) = n.
      \end{equation*}
\end{lemma}
\begin{proof}
   Clearly, the first equality follows from Lemma \ref{tracelemma} by letting $\nu = T_{\#}\mu$ and $\gamma = ({\bf Id}, T)_{\#}\mu$. 
    Since ${{\bf S}_\mu^{-1}}_\# \mu$ is also a probabilistic dual frame, then 
     \begin{equation*}
      n = \int_{\mathbb{R}^n} \langle {\bf x}, {{\bf S}_\mu^{-1}} {\bf x}\rangle d\mu({\bf x}) = \int_{\mathbb{R}^n} \langle {{\bf S}_\mu^{-1/2}}{\bf x}, {{\bf S}_\mu^{-1/2}} {\bf x} \rangle d\mu({\bf x}) = \int_{\mathbb{R}^n} \Vert {\bf S}_\mu^{-1/2}{\bf x} \Vert^2 d\mu({\bf x}).
    \end{equation*}
\end{proof}

It has been shown that if a probability measure is close to a given probabilistic frame in an appropriate sense, then this probability measure is a probabilistic frame, and especially, a sufficient perturbation condition using probabilistic dual frames was given in Theorem 3.6 of \cite{chen2023paley}. Let $\mu$ be a probabilistic frame and $\nu$ a probabilistic dual frame of $\mu$ with respect to $\gamma_{12} \in \Gamma(\mu, \nu)$. Suppose $\eta \in \mathcal{P}_2(\mathbb{R}^n)$ and $\gamma_{23} \in \Gamma(\nu, \eta)$, then by Gluing Lemma {\cite[pp.~59]{figalli2021invitation}}, there exists $ \Tilde{\pi} \in \mathcal{P}(\mathbb{R}^n \times \mathbb{R}^n \times \mathbb{R}^n)$ with marginals $\gamma_{12}$ and $\gamma_{23}$. If
 \begin{equation*}
     \sigma:= \int_{\mathbb{R}^n \times \mathbb{R}^n \times \mathbb{R}^n}  \Vert {\bf x}-{\bf z} \Vert \Vert {\bf y} \Vert d\Tilde{\pi}({\bf x},{\bf y},{\bf z}) <1,
 \end{equation*}
then $\eta$ is a probabilistic frame with bounds $ \frac{(1- \sigma)^2}{M_2(\nu)} \ \text{and} \  M_2(\eta)$. If the dual frame $\nu$ is of pushforward type, i.e., $\nu = T_\#\mu$ for a map $T: \mathbb{R}^n \rightarrow \mathbb{R}^n$, then $\gamma_{12} \in \Gamma(\mu, T_\#\mu)$ and Gluing Lemma implies that $\Tilde{\pi}$ is the product measure $ \gamma_{12} \otimes \eta$. Therefore, the above condition becomes
 \begin{equation*}
    \int_{\mathbb{R}^n \times \mathbb{R}^n}  \Vert {\bf x}-{\bf z} \Vert \Vert T({\bf x}) \Vert d\mu({\bf x}) d\eta({\bf z}) <1.
 \end{equation*}
Indeed, as stated below, the conclusion that $\eta$ is a probabilistic frame holds for any coupling $\gamma \in \Gamma(\mu, \eta)$, besides the product measure $\mu \otimes \eta$,  and its proof is placed in Section \ref{ProofOfLemmas}. The following also generalizes Proposition 3.9 in \cite{chen2023paley} where $T_\#\mu = {{\bf S}_\mu^{-1}}_\# \mu$.

\begin{proposition}\label{dualperturbation}
    Let $\mu$ be a probabilistic frame and let $T:\mathbb{R}^n \rightarrow \mathbb{R}^n$ be such that $ T_\# \mu$ is a probabilistic dual frame to $\mu$. Given $\eta \in \mathcal{P}_2(\mathbb{R}^n)$ and $\gamma \in \Gamma(\mu, \eta)$, if
 \begin{equation*}
     \kappa:= \int_{\mathbb{R}^n \times \mathbb{R}^n}  \Vert {\bf x}-{\bf z} \Vert \Vert T({\bf x}) \Vert d\gamma({\bf x}, {\bf z}) <1,
 \end{equation*}
then $\eta$ is a probabilistic frame with bounds $ \frac{(1- \kappa)^2}{M_2(T_\#\mu)} $ and $ M_2(\eta)$. In particular, if $B>0$ is the upper bound of $T_\#\mu$, then the bounds for $\eta$ are $ \frac{(1- \kappa)^2}{B} $ and $ M_2(\eta)$. 
\end{proposition}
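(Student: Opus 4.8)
The plan is to prove the two frame bounds separately. The upper bound is immediate: by hypothesis $\eta \in \mathcal{P}_2(\mathbb{R}^n)$, so $\eta$ is automatically Bessel with bound $M_2(\eta)$, and no work is needed there. All the content lies in the lower bound, and the starting point is the reconstruction identity coming from duality. Since $T_\#\mu$ is a probabilistic dual frame to $\mu$ with respect to $({\bf Id}, T)_\#\mu$, Lemma \ref{DualFrameEquivalence} gives, for every ${\bf f} \in \mathbb{R}^n$,
$$\|{\bf f}\|^2 = \int_{\mathbb{R}^n} \langle {\bf x}, {\bf f}\rangle \langle T({\bf x}), {\bf f}\rangle \, d\mu({\bf x}).$$
The crucial observation is that, because the integrand depends only on ${\bf x}$ and $\mu$ is the first marginal of $\gamma$, I may lift this to an integral against the arbitrary coupling $\gamma \in \Gamma(\mu, \eta)$:
$$\|{\bf f}\|^2 = \int_{\mathbb{R}^n \times \mathbb{R}^n} \langle {\bf x}, {\bf f}\rangle \langle T({\bf x}), {\bf f}\rangle \, d\gamma({\bf x}, {\bf z}).$$
This lifting is exactly what lets a general $\gamma$, rather than only the product measure $\mu \otimes \eta$, enter the hypothesis.

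Next I would insert the splitting $\langle {\bf x}, {\bf f}\rangle = \langle {\bf z}, {\bf f}\rangle + \langle {\bf x} - {\bf z}, {\bf f}\rangle$, breaking the integral into a main term and an error term. For the main term, Cauchy--Schwarz with respect to $\gamma$ bounds
$$\left| \int \langle {\bf z}, {\bf f}\rangle \langle T({\bf x}), {\bf f}\rangle \, d\gamma \right| \le \left( \int |\langle {\bf z}, {\bf f}\rangle|^2 \, d\gamma \right)^{1/2} \left( \int |\langle T({\bf x}), {\bf f}\rangle|^2 \, d\gamma \right)^{1/2}.$$
Reading off the marginals, the first factor equals $\left(\int |\langle {\bf z}, {\bf f}\rangle|^2 \, d\eta({\bf z})\right)^{1/2}$, which is precisely the quadratic form for $\eta$ that I want to bound below, while the second equals $\left(\int |\langle {\bf w}, {\bf f}\rangle|^2 \, d(T_\#\mu)({\bf w})\right)^{1/2} \le M_2(T_\#\mu)^{1/2}\|{\bf f}\|$ because $T_\#\mu \in \mathcal{P}_2(\mathbb{R}^n)$ is Bessel with bound $M_2(T_\#\mu)$. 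For the error term I would apply Cauchy--Schwarz pointwise to each inner product, using $|\langle {\bf x} - {\bf z}, {\bf f}\rangle| \le \|{\bf x} - {\bf z}\|\,\|{\bf f}\|$ and $|\langle T({\bf x}), {\bf f}\rangle| \le \|T({\bf x})\|\,\|{\bf f}\|$, so that by the definition of $\kappa$ the error term is at most $\kappa \|{\bf f}\|^2$.

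Combining the two estimates gives
$$\|{\bf f}\|^2 \le \left( \int |\langle {\bf z}, {\bf f}\rangle|^2 \, d\eta \right)^{1/2} M_2(T_\#\mu)^{1/2} \|{\bf f}\| + \kappa \|{\bf f}\|^2,$$
whence $(1-\kappa)\|{\bf f}\| \le \left( \int |\langle {\bf z}, {\bf f}\rangle|^2 \, d\eta \right)^{1/2} M_2(T_\#\mu)^{1/2}$ for ${\bf f} \ne {\bf 0}$; squaring and rearranging produces the lower bound $\frac{(1-\kappa)^2}{M_2(T_\#\mu)}$, which together with the trivial upper bound shows $\eta$ is a probabilistic frame. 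The refinement stated in the ``in particular'' clause follows by replacing $M_2(T_\#\mu)$ with the sharper upper frame bound $B$ of $T_\#\mu$ in the estimate of the second Cauchy--Schwarz factor, i.e.\ using $\int |\langle {\bf w}, {\bf f}\rangle|^2 \, d(T_\#\mu)({\bf w}) \le B\|{\bf f}\|^2$.

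The main obstacle is not any individual inequality but arranging the decomposition so that the leftover is controlled by \emph{exactly} the quantity $\kappa$ appearing in the hypothesis; this forces the particular split $\langle {\bf x}, {\bf f}\rangle = \langle {\bf z}, {\bf f}\rangle + \langle {\bf x} - {\bf z}, {\bf f}\rangle$ and the application of one Cauchy--Schwarz in the measure $\gamma$ (for the main term) and another on the inner products (for the error term). The subtlety to track is that each Cauchy--Schwarz factor must collapse to a recognizable marginal integral, and the whole argument hinges on the lifting of the $\mu$-reconstruction identity to an integral against $\gamma$ in the first paragraph.
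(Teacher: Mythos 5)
Your proposal is correct, and every step checks out: the lifting of the reconstruction identity from $\mu$ to $\gamma$ is legitimate because the integrand depends only on ${\bf x}$ and ${\pi_x}_\#\gamma=\mu$; both pieces of the splitting are absolutely convergent (the main term by Cauchy--Schwarz with $\eta, T_\#\mu\in\mathcal{P}_2(\mathbb{R}^n)$, the error term because it is dominated by $\kappa\|{\bf f}\|^2<\infty$); and the final rearrangement is valid since $1-\kappa>0$. However, you take a genuinely different route from the paper. The paper runs the classical operator-perturbation version of the Paley--Wiener argument: it defines $L({\bf f})=\int_{\mathbb{R}^n\times\mathbb{R}^n}\langle {\bf f}, T({\bf x})\rangle\,{\bf z}\,d\gamma({\bf x},{\bf z})$, shows $\|{\bf f}-L({\bf f})\|\le\kappa\|{\bf f}\|<\|{\bf f}\|$ using the same lifted reconstruction identity, concludes via a Neumann-series argument that $L$ is invertible with $\|L^{-1}\|\le\frac{1}{1-\kappa}$, and then applies Cauchy--Schwarz to $\|{\bf f}\|^4=|\langle LL^{-1}{\bf f},{\bf f}\rangle|^2$, using the Bessel bound of $T_\#\mu$ on the factor involving $L^{-1}{\bf f}$. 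Your argument is the ``quadratic form'' version of the same perturbation idea: you never introduce $L$, but instead split $\langle{\bf x},{\bf f}\rangle=\langle{\bf z},{\bf f}\rangle+\langle{\bf x}-{\bf z},{\bf f}\rangle$ directly inside the identity $\|{\bf f}\|^2=\int\langle{\bf x},{\bf f}\rangle\langle T({\bf x}),{\bf f}\rangle\,d\gamma$ and absorb the error term. Both routes produce exactly the constants $\frac{(1-\kappa)^2}{M_2(T_\#\mu)}$ and $\frac{(1-\kappa)^2}{B}$. Yours is shorter and more elementary, requiring no operator inversion; the paper's version additionally establishes the invertibility of $L$ with a quantitative bound on $\|L^{-1}\|$, which yields an explicit reconstruction formula ${\bf f}=\int\langle L^{-1}{\bf f},T({\bf x})\rangle\,{\bf z}\,d\gamma({\bf x},{\bf z})$ from the perturbed measure and is therefore of some independent interest.
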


\section{Probabilistic Dual Frame Potentials}\label{dualpotential}

In this section, we study probabilistic dual frame potentials, which are motivated by frame potentials and dual frame potentials of finite frames considered by \cite{benedetto2003finite} and \cite{christensen2020equiangular}, respectively. 
One of the motivations for frame potentials was to measure the total orthogonality of a finite unit-norm frame, and under this interpretation, minimizing the frame potential is equivalent to finding a frame whose elements are as orthogonal to each other as possible \cite{benedetto2003finite}.  Similarly, for a given finite frame, the associated dual frame potential is designed to measure the total cross-orthogonality (or cross-correlation) between the frame and its dual frame. The minimizer is precisely the dual frame whose elements are as orthogonal as possible to the original frame vectors. In particular, the authors in \cite{christensen2020equiangular} showed that the dual frame potential is minimized by the canonical dual frame, meaning that the canonical dual has the least cross-correlation and the highest cross-orthogonality to the given frame. We sketch their result and proof below to make the paper more self-contained.

\begin{theorem}[Theorem 2.2 in \cite{christensen2020equiangular}]\label{FiniteDualPotential}
    Let $\{{\bf f}_i\}_{i=1}^N$ be a frame for $\mathbb{C}^n$ with frame operator ${\bf S}$, and $\{{\bf g}_j\}_{j=1}^N$ a dual frame of $\{{\bf f}_i\}_{i=1}^N$ where $N \geq n$. Then the dual frame potential between $\{{\bf f}_i\}_{i=1}^N$ and $\{{\bf g}_j\}_{j=1}^N$ satisfies
    \begin{equation*}
       \mathrm{DFP}(\{{\bf f}_i\}, \{{\bf g}_j\}): = \sum_{i=1}^N  \sum_{j=1}^N |\langle {\bf f}_i, {\bf g}_j \rangle|^2 \geq n,
    \end{equation*}
    and the equality holds if and only if $\{{\bf g}_j\}_{j=1}^N $ is the canonical dual frame $ \{{\bf S}^{-1}{\bf f}_j\}_{j=1}^N$.
\end{theorem}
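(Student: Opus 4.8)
The plan is to recast the dual frame potential as a squared Frobenius norm and then peel off the canonical dual by an orthogonality (Pythagorean) argument. First I would pass to synthesis operators: let ${\bf F} = [{\bf f}_1, \dots, {\bf f}_N]$ and ${\bf G} = [{\bf g}_1, \dots, {\bf g}_N]$ be the $n \times N$ matrices whose columns are the frame and dual frame vectors, so that ${\bf S} = {\bf F}{\bf F}^*$ and the dual frame relation is encoded by ${\bf G}{\bf F}^* = {\bf F}{\bf G}^* = {\bf Id}$, where $^*$ is the conjugate transpose. With this notation the $(j,i)$ entry of ${\bf G}^* {\bf F}$ is exactly $\langle {\bf f}_i, {\bf g}_j \rangle$, hence $\mathrm{DFP}(\{{\bf f}_i\}, \{{\bf g}_j\}) = \Vert {\bf G}^* {\bf F} \Vert_F^2$, with $\Vert \cdot \Vert_F$ the Frobenius norm.

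The second step is to write an arbitrary dual as a perturbation of the canonical dual ${\bf G}_{\mathrm{can}} := {\bf S}^{-1}{\bf F}$. Setting ${\bf W} := {\bf G} - {\bf G}_{\mathrm{can}}$, the identities ${\bf G}{\bf F}^* = {\bf Id}$ and ${\bf G}_{\mathrm{can}}{\bf F}^* = {\bf S}^{-1}{\bf S} = {\bf Id}$ yield the key constraint ${\bf W}{\bf F}^* = {\bf 0}$, equivalently ${\bf F}{\bf W}^* = {\bf 0}$. I would then expand
\begin{equation*}
\Vert {\bf G}^* {\bf F} \Vert_F^2 = \Vert {\bf G}_{\mathrm{can}}^* {\bf F} \Vert_F^2 + 2\,\mathrm{Re}\,\langle {\bf G}_{\mathrm{can}}^* {\bf F}, {\bf W}^* {\bf F}\rangle_F + \Vert {\bf W}^* {\bf F} \Vert_F^2,
\end{equation*}
and verify that the cross term vanishes: since ${\bf G}_{\mathrm{can}}^* {\bf F} = {\bf F}^* {\bf S}^{-1}{\bf F}$, cyclic invariance of the trace reduces the Frobenius inner product to $\trace({\bf F}{\bf W}^*) = \trace(({\bf W}{\bf F}^*)^*) = 0$. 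For the remaining canonical term, the matrix ${\bf F}^* {\bf S}^{-1}{\bf F}$ is idempotent and self-adjoint (because ${\bf F}{\bf F}^* = {\bf S}$), i.e. the orthogonal projection onto $\Ran({\bf F}^*)$, so its squared Frobenius norm equals its trace, and $\trace({\bf F}^* {\bf S}^{-1}{\bf F}) = \trace({\bf S}^{-1}{\bf F}{\bf F}^*) = \trace({\bf Id}) = n$. Combining these gives $\mathrm{DFP} = n + \Vert {\bf W}^* {\bf F} \Vert_F^2 \geq n$.

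Finally, equality forces ${\bf W}^* {\bf F} = {\bf 0}$, whose $(j,i)$ entry is $\langle {\bf f}_i, {\bf w}_j\rangle$; since $\{{\bf f}_i\}_{i=1}^N$ spans $\mathbb{C}^n$, each column ${\bf w}_j$ of ${\bf W}$ is orthogonal to all of $\mathbb{C}^n$ and hence zero, so ${\bf W} = {\bf 0}$ and ${\bf G} = {\bf G}_{\mathrm{can}}$, meaning the dual is canonical. The step I expect to require the most care is the vanishing of the cross term: it is precisely where the dual constraint ${\bf W}{\bf F}^* = {\bf 0}$ must be matched to the correct cyclic rearrangement of the trace, and it is the only point at which the special form ${\bf S}^{-1}{\bf F}$ of the canonical dual is essential. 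Everything else is routine linear algebra.
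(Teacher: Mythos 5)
Your proof is correct, and at its core it is the same Pythagorean argument the paper uses: both decompose an arbitrary dual as the canonical dual plus a perturbation lying in the kernel of the synthesis operator, observe that the cross term vanishes, and conclude $\mathrm{DFP} = n + (\text{nonnegative remainder})$. The difference is in packaging. The paper works pointwise: it fixes ${\bf f}$, invokes the least-$\ell^2$-norm property of the canonical coefficients (Lemma 5.4.2 in \cite{christensen2016introduction}) to get $\sum_j |\langle {\bf f}, {\bf g}_j\rangle|^2 \ge \sum_j |\langle {\bf f}, {\bf S}^{-1}{\bf f}_j\rangle|^2$, sums over ${\bf f} = {\bf f}_i$, and then cites Proposition 17 of \cite{aceska2022cross} for the identity $\sum_{i,j}|\langle {\bf f}_i, {\bf S}^{-1}{\bf f}_j\rangle|^2 = n$. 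You instead run the whole computation globally in matrix form: the constraint ${\bf W}{\bf F}^* = {\bf 0}$ is exactly the statement that the perturbation coefficients lie in $\Ker$ of synthesis, your cross-term cancellation via cyclicity of the trace is the global version of equation (3.1) in the paper, and your identification of ${\bf F}^*{\bf S}^{-1}{\bf F}$ as the orthogonal projection onto $\Ran({\bf F}^*)$ with $\trace = n$ replaces the two cited facts with a self-contained two-line computation. What your version buys is self-containment and a cleaner treatment of the equality case (${\bf W}^*{\bf F} = {\bf 0}$ plus spanning immediately forces ${\bf W} = {\bf 0}$); what the paper's version buys is that the pointwise lemma generalizes directly to the probabilistic setting of Proposition \ref{DualEnergy}, which is the route the rest of the paper actually needs.
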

\begin{proof}
    It is well-known (see \cite{christensen2016introduction}, Lemma 5.4.2) that for a fixed ${\bf f} \in \mathbb{C}^{n}$, the frame coefficient sequence $\{\langle {\bf f}, {\bf S}^{-1}{\bf f}_j\rangle\}_{i=1}^N$ has the least $\ell^2$-norm energy among any other sequences representing  ${\bf f}$. That is, if ${\bf f} = \sum\limits_{i=1}^N c_i {\bf f}_i$ for some coefficients $\{c_i\}_{i=1}^N$, then 
    \begin{equation}\label{leastDualEnergy}
        \sum_{i=1}^N |c_i|^2 = \sum_{i=1}^N |\langle {\bf f}, {\bf S}^{-1}{\bf f}_i\rangle|^2 + \sum_{i=1}^N |c_i - \langle {\bf f}, {\bf S}^{-1}{\bf f}_i\rangle|^2.
    \end{equation}
    Therefore, 
       $$ \sum_{i=1}^N |c_i|^2 \geq \sum_{i=1}^N |\langle {\bf f}, {\bf S}^{-1}{\bf f}_i\rangle|^2, $$
    and the equality holds if and only if $c_i = \langle {\bf f}, {\bf S}^{-1}{\bf f}_i \rangle$, $1 \leq i \leq N$.  Since $\{{\bf g}_j\}_{j=1}^N$ is a dual frame of $\{{\bf f}_i\}_{i=1}^N$, then 
    ${\bf f} = \sum\limits_{j=1}^N \langle {\bf f}, {\bf g}_j \rangle {\bf f}_j $. Therefore, 
     \begin{equation*}
        \sum_{j=1}^N |\langle {\bf f}, {\bf g}_j \rangle|^2 \geq \sum_{j=1}^N |\langle {\bf f}, {\bf S}^{-1}{\bf f}_j\rangle|^2 ,
    \end{equation*}
    and the equality holds if and only if ${\bf g}_j = {\bf S}^{-1} {\bf f}_j$, $1 \leq j \leq N$. Now let ${\bf f} = {\bf f}_i$ and sum over $i$, we have 
     \begin{equation*}
        \sum_{i=1}^N  \sum_{j=1}^N | \langle {\bf f}_i, {\bf g}_j \rangle|^2 \geq \sum_{i=1}^N  \sum_{j=1}^N |\langle {\bf f}_i, {\bf S}^{-1}{\bf f}_j\rangle|^2 =n ,
    \end{equation*}
    and the equality in the first inequality holds if and only if ${\bf g}_j = {\bf S}^{-1} {\bf f}_j$, $1 \leq j \leq N$. The last identity can be verified by Proposition 17 in \cite{aceska2022cross}. 
\end{proof}

Analogously, given a probabilistic frame $\mu$, one can define the probabilistic dual frame potential (PDFP) to measure the expected cross-orthogonality between the frame and its dual frame. Consequently, minimizing this potential corresponds to selecting a probabilistic dual frame that is, on average, as orthogonal as possible to the original frame.

\begin{definition}
    Let $\mu$ be a probabilistic frame and $\nu \in \mathcal{P}_2(\mathbb{R}^n)$ a probabilistic dual frame to $\mu$. The probabilistic dual frame potential between $\mu$ and $\nu$ is 
          \begin{equation*}
              \mathrm{PDFP}_\mu(\nu) := \int_{\mathbb{R}^n}  \int_{\mathbb{R}^n} |\langle {\bf x}, {\bf y} \rangle| ^2 d\mu({\bf x}) d\nu({\bf y}) .
          \end{equation*}
\end{definition}

Note that if $T_{\#}\mu$ is a probabilistic dual frame for $\mu$ where $T:\mathbb{R}^n \rightarrow \mathbb{R}^n$ is measurable, then the probabilistic dual frame potential between $\mu$ and $T_{\#}\mu$ becomes
     \begin{equation*}
              \mathrm{PDFP}_\mu(T_{\#}\mu) := \int_{\mathbb{R}^n}  \int_{\mathbb{R}^n} \vert \langle {\bf x}, T({\bf y}) \rangle \vert ^2 d\mu({\bf x}) d\mu({\bf y}) .
    \end{equation*}
In addition, we show that the probabilistic dual frame potential for a probabilistic dual pair is invariant under the unitary transform.

\begin{lemma}
    Let $\mu$ be a probabilistic frame and ${\bf U}$ a unitary $n \times n$ matrix.  Then  ${\bf U}_{\#}\mu$ is a probabilistic frame, and if $\nu \in \mathcal{P}_2(\mathbb{R}^n)$ is a probabilistic dual frame to $\mu$, then ${\bf U}_{\#}\nu$ is also a dual frame to ${\bf U}_{\#}\mu$. Furthermore, the probabilistic dual frame potential is unitarily invariant. That is to say,  $\mathrm{PDFP}_\mu(\nu) = \mathrm{PDFP}_{{\bf U}_{\#}\mu}({\bf U}_{\#}\nu)$.

\end{lemma}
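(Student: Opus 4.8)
The plan is to reduce all three assertions to the pushforward change-of-variables formula $\int_{\mathbb{R}^n} g\, d({\bf U}_{\#}\mu) = \int_{\mathbb{R}^n} (g\circ {\bf U})\, d\mu$, combined with the two defining properties of a (real) unitary, i.e.\ orthogonal, matrix: the relation ${\bf U}^t{\bf U} = {\bf Id}$ and the inner-product preservation $\langle {\bf U}{\bf x}, {\bf U}{\bf y}\rangle = \langle {\bf x}, {\bf y}\rangle$ that it entails. First I would establish that ${\bf U}_{\#}\mu$ is a probabilistic frame by computing its frame operator. Applying change of variables to the defining integral gives
\begin{equation*}
{\bf S}_{{\bf U}_{\#}\mu} = \int_{\mathbb{R}^n} {\bf y}{\bf y}^t\, d({\bf U}_{\#}\mu)({\bf y}) = \int_{\mathbb{R}^n} ({\bf U}{\bf x})({\bf U}{\bf x})^t\, d\mu({\bf x}) = {\bf U}\,{\bf S}_\mu\,{\bf U}^t.
\end{equation*}
Since ${\bf U}^t = {\bf U}^{-1}$, the right-hand side is an orthogonal conjugate of ${\bf S}_\mu$ and is hence positive definite precisely when ${\bf S}_\mu$ is; by Proposition \ref{TAcharacterization} this shows that ${\bf U}_{\#}\mu$ is a probabilistic frame.

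For the duality claim, suppose $\nu$ is dual to $\mu$ with respect to a coupling $\gamma \in \Gamma(\mu, \nu)$. I would transport this coupling by the map $\Phi: \mathbb{R}^n \times \mathbb{R}^n \to \mathbb{R}^n \times \mathbb{R}^n$, $\Phi({\bf x}, {\bf y}) = ({\bf U}{\bf x}, {\bf U}{\bf y})$, and set $\tilde\gamma := \Phi_{\#}\gamma$. Because $\pi_x \circ \Phi = {\bf U}\circ \pi_x$ and $\pi_y \circ \Phi = {\bf U}\circ \pi_y$, the marginals of $\tilde\gamma$ are ${\bf U}_{\#}\mu$ and ${\bf U}_{\#}\nu$, so $\tilde\gamma \in \Gamma({\bf U}_{\#}\mu, {\bf U}_{\#}\nu)$. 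The change-of-variables formula then gives
\begin{equation*}
\int_{\mathbb{R}^n \times \mathbb{R}^n} {\bf u}{\bf v}^t\, d\tilde\gamma({\bf u}, {\bf v}) = \int_{\mathbb{R}^n \times \mathbb{R}^n} ({\bf U}{\bf x})({\bf U}{\bf y})^t\, d\gamma({\bf x}, {\bf y}) = {\bf U}\Big(\int_{\mathbb{R}^n \times \mathbb{R}^n} {\bf x}{\bf y}^t\, d\gamma\Big){\bf U}^t = {\bf U}\,{\bf Id}\,{\bf U}^t = {\bf Id},
\end{equation*}
so ${\bf U}_{\#}\nu$ is a probabilistic dual frame of ${\bf U}_{\#}\mu$ with respect to $\tilde\gamma$.

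Finally, for the unitary invariance of the potential, I would apply change of variables in each factor of the double integral and then use inner-product preservation:
\begin{equation*}
\mathrm{PDFP}_{{\bf U}_{\#}\mu}({\bf U}_{\#}\nu) = \int_{\mathbb{R}^n}\int_{\mathbb{R}^n} |\langle {\bf U}{\bf x}, {\bf U}{\bf y}\rangle|^2\, d\mu({\bf x})\, d\nu({\bf y}) = \int_{\mathbb{R}^n}\int_{\mathbb{R}^n} |\langle {\bf x}, {\bf y}\rangle|^2\, d\mu({\bf x})\, d\nu({\bf y}) = \mathrm{PDFP}_\mu(\nu).
\end{equation*}
The entire argument is a sequence of direct computations, so there is no substantial obstacle; the one point deserving care is the construction of the transported coupling $\tilde\gamma$ and the verification that its marginals are correct, since this is precisely what legitimizes calling ${\bf U}_{\#}\nu$ a \emph{dual} frame of ${\bf U}_{\#}\mu$ rather than merely a probabilistic frame.
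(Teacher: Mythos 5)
Your proof is correct and follows essentially the same route as the paper: the same frame-operator computation ${\bf S}_{{\bf U}_{\#}\mu}={\bf U}{\bf S}_\mu{\bf U}^t$, the same transported coupling $({\bf U},{\bf U})_{\#}\gamma$, and the same change-of-variables argument for the potential. The only cosmetic difference is that you verify the duality condition directly from the matrix-valued integral $\int {\bf u}{\bf v}^t\,d\tilde\gamma={\bf Id}$, whereas the paper routes through its equivalence lemma for the bilinear form $\langle{\bf f},{\bf g}\rangle$; the two are interchangeable.
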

\begin{proof}
Since ${\bf U}$ is unitary, then the frame operator of ${\bf U}_{\#}\mu$ is positive definite, i.e.,
     \begin{equation*}
         {\bf S}_{{\bf U}_{\#}\mu} = \int_{\mathbb{R}^n} {\bf y} {\bf y}^{t} d {\bf U}_{\#}\mu({\bf y})  = \int_{\mathbb{R}^n} {\bf U} {\bf x} {{\bf x}}^{t} {\bf U}^t d\mu({\bf x}) = {\bf U} {\bf S}_{\mu} {\bf U}^t >0.
     \end{equation*}
     Therefore, ${\bf U}_{\#}\mu$ is a probabilistic frame. If $\nu$ is a probabilistic dual frame to $\mu$, then by Lemma \ref{DualFrameEquivalence}, there exists $\gamma \in \Gamma(\mu, \nu)$ such that for any ${\bf f}, {\bf g} \in \mathbb{R}^n$, 
        $$\langle {\bf f}, {\bf g} \rangle = \int_{\mathbb{R}^n \times \mathbb{R}^n} \langle {\bf x}, {\bf f} \rangle \langle {\bf y}, {\bf g}\rangle d\gamma({\bf x},{\bf y}).$$ 
Now define $\gamma' := ({\bf U}, {\bf U})_\# \gamma \in \Gamma({\bf U}_{\#}\mu, {\bf U}_{\#}\nu)$. Then for any ${\bf f}, {\bf g} \in \mathbb{R}^n$, 
        \begin{equation*}
        \begin{split}
             \int_{\mathbb{R}^n \times \mathbb{R}^n} \langle {\bf x}, {\bf f} \rangle \langle {\bf y}, {\bf g}\rangle d\gamma'({\bf x},{\bf y}) 
             &= \int_{\mathbb{R}^n \times \mathbb{R}^n} \langle {\bf U} {\bf x}, {\bf f} \rangle \langle {\bf U} {\bf y}, {\bf g}\rangle d\gamma({\bf x},{\bf y}) \\
             &= \int_{\mathbb{R}^n \times \mathbb{R}^n} \langle  {\bf x}, {\bf U}^t {\bf f} \rangle \langle  {\bf y}, {\bf U}^t {\bf g}\rangle d\gamma({\bf x},{\bf y}) \\
             & = \langle {\bf U}^t  {\bf f}, {\bf U}^t  {\bf g} \rangle = \langle {\bf f}, {\bf g} \rangle, 
        \end{split}
        \end{equation*} 
where the last equality comes from ${\bf U}^t$ being unitary.
Therefore, by Lemma \ref{DualFrameEquivalence}  again, ${\bf U}_{\#}\nu$ is  a probabilistic dual frame to ${\bf U}_{\#}\mu$ with respect to $\gamma'$. And the associated probabilistic dual frame potential is unitarily invariant, since
     \begin{equation*}
     \begin{split}
          \mathrm{PDFP}_\mu(\nu) &= \int_{\mathbb{R}^n} \int_{\mathbb{R}^n} |\langle {\bf x}, {\bf y} \rangle|^2 d\mu({\bf x}) d\nu({\bf y}) = \int_{\mathbb{R}^n} \int_{\mathbb{R}^n} |\langle {\bf U} {\bf x}, {\bf U}{\bf y} \rangle|^2 d\mu({\bf x}) d\nu({\bf y}) \\
          &=\int_{\mathbb{R}^n} \int_{\mathbb{R}^n} |\langle {\bf x}, {\bf y} \rangle|^2 d{\bf U}_{\#}\mu({\bf x}) d{\bf U}_{\#}\nu({\bf y}) = \mathrm{PDFP}_{{\bf U}_{\#}\mu}({\bf U}_{\#}\nu).
     \end{split}
     \end{equation*}
\end{proof}

As Equation \ref{leastDualEnergy} suggests, for a fixed ${\bf f} \in \mathbb{R}^{n}$ and a frame $\{{\bf f}_i\}_{i=1}^N$, the frame coefficient sequence $\{\langle {\bf f}, {\bf S}^{-1}{\bf f}_i\rangle\}_{i=1}^N$ has the least $\ell^2$-norm energy among any other sequences representing  ${\bf f}$ (see \cite{christensen2016introduction}, Lemma 5.4.2). We obtain an analogous result for probabilistic frames, which shows that among all reconstructions, the canonical dual frame representation has the least $L^2(\mu, \mathbb{R}^n)$ energy, where $L^2(\mu, \mathbb{R}^n)$ 
denotes the space of square-integrable functions on $\mathbb{R}^n$ with respect to $\mu$. 

\begin{proposition}\label{DualEnergy}
    Let $\mu$ be a probabilistic frame with frame operator ${\bf S}_\mu$. For any fixed ${\bf f} \in \mathbb{R}^n$, if ${\bf f} = \int_{\mathbb{R}^n} {\bf x} \omega({\bf x}) d\mu({\bf x})$ for some $\omega \in L^2(\mu, \mathbb{R}^n)$, then 
   \begin{equation*}
       \int_{\mathbb{R}^n} \omega^2({\bf x}) d\mu({\bf x}) =  \int_{\mathbb{R}^n} |\langle {\bf S}_\mu^{-1}{\bf f}, {\bf x} \rangle|^2  d\mu({\bf x}) + \int_{\mathbb{R}^n} \vert \omega({\bf x})-\langle {\bf S}_\mu^{-1}{\bf f}, {\bf x} \rangle \vert^2 d\mu({\bf x}).
    \end{equation*}
   Furthermore, 
    \begin{equation*}
       \int_{\mathbb{R}^n} \omega^2({\bf x}) d\mu({\bf x}) \geq  \int_{\mathbb{R}^n} |\langle {\bf S}_\mu^{-1}{\bf f}, {\bf x} \rangle|^2  d\mu({\bf x}),
    \end{equation*}
   and the equality holds if and only if $\omega ({\bf x}) = \langle {\bf S}_\mu^{-1}{\bf f}, {\bf x} \rangle$ for $\mu$ almost all ${\bf x} \in \mathbb{R}^n$. 
\end{proposition}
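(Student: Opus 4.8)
The plan is to mimic the finite-frame argument behind the identity \eqref{leastDualEnergy}: I would exhibit the canonical-dual coefficient function as the $L^2(\mu,\mathbb{R}^n)$-orthogonal projection of any admissible reconstruction coefficient, so that a Pythagorean identity falls out immediately. Write ${\bf g} := {\bf S}_\mu^{-1}{\bf f}$ and set $\omega_0({\bf x}) := \langle {\bf g}, {\bf x}\rangle$. Since $\mu \in \mathcal{P}_2(\mathbb{R}^n)$, the Cauchy--Schwarz inequality gives $\int_{\mathbb{R}^n} |\omega_0({\bf x})|^2 d\mu({\bf x}) \leq \|{\bf g}\|^2 M_2(\mu) < \infty$, so $\omega_0 \in L^2(\mu,\mathbb{R}^n)$ and, together with $\omega \in L^2(\mu,\mathbb{R}^n)$, every integral appearing below is finite.

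First I would check that $\omega_0$ is itself an admissible reconstruction coefficient for ${\bf f}$. Using the definition of the frame operator ${\bf S}_\mu = \int_{\mathbb{R}^n} {\bf x}{\bf x}^t d\mu({\bf x})$,
\begin{equation*}
\int_{\mathbb{R}^n} {\bf x}\, \omega_0({\bf x}) \, d\mu({\bf x}) = \int_{\mathbb{R}^n} {\bf x}\, {\bf x}^t {\bf g}\, d\mu({\bf x}) = {\bf S}_\mu {\bf g} = {\bf S}_\mu {\bf S}_\mu^{-1} {\bf f} = {\bf f}.
\end{equation*}
Hence both $\omega$ and $\omega_0$ reconstruct ${\bf f}$, and subtracting yields the crucial moment cancellation $\int_{\mathbb{R}^n} {\bf x}\,(\omega({\bf x}) - \omega_0({\bf x})) \, d\mu({\bf x}) = {\bf f} - {\bf f} = {\bf 0}$.

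The heart of the argument is then the orthogonality relation $\int_{\mathbb{R}^n} \omega_0({\bf x})\,(\omega({\bf x}) - \omega_0({\bf x})) \, d\mu({\bf x}) = 0$, which I would obtain by pulling the constant vector ${\bf g}$ out of the integral:
\begin{equation*}
\int_{\mathbb{R}^n} \langle {\bf g}, {\bf x}\rangle\,(\omega({\bf x}) - \omega_0({\bf x})) \, d\mu({\bf x}) = \Big\langle {\bf g}, \int_{\mathbb{R}^n} {\bf x}\,(\omega({\bf x}) - \omega_0({\bf x})) \, d\mu({\bf x}) \Big\rangle = \langle {\bf g}, {\bf 0}\rangle = 0.
\end{equation*}
Expanding $\int_{\mathbb{R}^n} (\omega - \omega_0)^2 d\mu = \int_{\mathbb{R}^n} \omega^2 d\mu - 2\int_{\mathbb{R}^n} \omega_0 \omega \, d\mu + \int_{\mathbb{R}^n} \omega_0^2 \, d\mu$ and substituting $\int_{\mathbb{R}^n} \omega_0 \omega \, d\mu = \int_{\mathbb{R}^n} \omega_0^2 \, d\mu$ (which is exactly the orthogonality just established) produces the claimed decomposition $\int_{\mathbb{R}^n} \omega^2 d\mu = \int_{\mathbb{R}^n} \omega_0^2 \, d\mu + \int_{\mathbb{R}^n} (\omega - \omega_0)^2 d\mu$. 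The displayed inequality is then immediate since the last term is nonnegative, and equality forces $\int_{\mathbb{R}^n} (\omega - \omega_0)^2 d\mu = 0$, i.e. $\omega({\bf x}) = \langle {\bf S}_\mu^{-1}{\bf f}, {\bf x}\rangle$ for $\mu$-almost all ${\bf x}$.

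I do not expect a genuine obstacle here: the entire content is the $L^2(\mu,\mathbb{R}^n)$ projection/Pythagoras mechanism, and the only points requiring any care are the integrability of all terms, handled at the outset by Cauchy--Schwarz together with $\mu \in \mathcal{P}_2(\mathbb{R}^n)$, and the scalar/vector bookkeeping when moving the constant vector ${\bf g}$ in and out of the integral.
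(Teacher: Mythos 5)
Your proof is correct and follows essentially the same route as the paper's: both identify the orthogonal decomposition $\omega = (\omega - \langle {\bf S}_\mu^{-1}{\bf f}, \cdot\rangle) + \langle {\bf S}_\mu^{-1}{\bf f}, \cdot\rangle$ in $L^2(\mu,\mathbb{R}^n)$ and conclude by the Pythagorean identity. The only cosmetic difference is that the paper packages the orthogonality as $\Ker V = (\Ran V^*)^\perp$ for the synthesis operator $V$, whereas you verify it directly by pulling the constant vector ${\bf S}_\mu^{-1}{\bf f}$ out of the integral; the underlying computation is identical.
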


\begin{proof}
For the probabilistic frame $\mu$, the synthesis operator $V:L^2(\mu, \mathbb{R}^n) \rightarrow \mathbb{R}^n$ and its adjoint analysis operator $V^*: \mathbb{R}^n \rightarrow L^2(\mu, \mathbb{R}^n)$ are defined as 
\begin{equation*}
    V(\psi) = \int_{\mathbb{R}^n} {\bf x} \ \psi({\bf x}) d\mu({\bf x}) \in \mathbb{R}^n, \  (V^*{\bf x})(\cdot) = \langle {\bf x}, \cdot \rangle \in L^2(\mu, \mathbb{R}^n).
\end{equation*}
It can be shown that $V$ and $V^*$ are bounded linear and thus
$\Ker V = (\Ran{V^*})^\perp$, where $\Ran V^*$ is the range of the analysis operator $V^*$ and  $\Ker V $  is the kernel of the synthesis operator $V$ given by
\begin{equation*}
    \Ker V = \{\psi \in L^2(\mu, \mathbb{R}^n): V(\psi) = \int_{\mathbb{R}^n} {\bf x} \ \psi({\bf x}) d\mu({\bf x}) = {\bf 0}\}.
\end{equation*}
For the given $\omega \in L^2(\mu, \mathbb{R}^n)$, 
\begin{equation*}
        \omega(\cdot) = \omega(\cdot) - \langle {\bf S}_\mu^{-1}{\bf f}, \cdot \rangle + \langle {\bf S}_\mu^{-1}{\bf f}, \cdot \rangle. 
    \end{equation*}
Since 
\begin{equation*}
    V(\omega - \langle {\bf S}_\mu^{-1}{\bf f}, \cdot \rangle) = \int_{\mathbb{R}^n} {\bf x} \omega({\bf x})  d\mu({\bf x})  - \int_{\mathbb{R}^n} {\bf x} \langle {\bf S}_\mu^{-1}{\bf f},{\bf x} \rangle  d\mu({\bf x}) = {\bf f}-{\bf f}={\bf 0},
\end{equation*}
then $\omega - \langle {\bf S}_\mu^{-1}{\bf f}, \cdot \rangle \in \Ker V = (\Ran{V^*})^\perp$. Since $\langle {\bf S}_\mu^{-1}{\bf f}, \cdot \rangle \in \Ran{V^*}$, then $\omega - \langle {\bf S}_\mu^{-1}{\bf f}, \cdot \rangle $ is orthogonal to $\langle {\bf S}_\mu^{-1}{\bf f}, \cdot \rangle $ in $L^2(\mu, \mathbb{R}^n)$. Therefore, by Pythagorean identity,
\begin{equation*}
        \Vert \omega \Vert_{L^2(\mu, \mathbb{R}^n)}^2 = \Vert \omega - \langle {\bf S}_\mu^{-1}{\bf f}, \cdot \rangle \Vert_{L^2(\mu, \mathbb{R}^n)}^2 + \Vert \langle {\bf S}_\mu^{-1}{\bf f}, \cdot \rangle \Vert_{L^2(\mu, \mathbb{R}^n)}^2.
    \end{equation*}
That is to say, 
\begin{equation*}
       \int_{\mathbb{R}^n} \omega^2({\bf x}) d\mu({\bf x}) =  \int_{\mathbb{R}^n} \vert \omega({\bf x})-\langle {\bf S}_\mu^{-1}{\bf f}, {\bf x} \rangle \vert^2 d\mu({\bf x}) + \int_{\mathbb{R}^n} |\langle {\bf S}_\mu^{-1}{\bf f}, {\bf x} \rangle|^2  d\mu({\bf x}).
    \end{equation*}
 Therefore, 
    \begin{equation*}
       \int_{\mathbb{R}^n} \omega^2({\bf x}) d\mu({\bf x}) \geq  \int_{\mathbb{R}^n} |\langle {\bf S}_\mu^{-1}{\bf f}, {\bf x} \rangle|^2  d\mu({\bf x}),
    \end{equation*}
and the equality holds if and only if
    \begin{equation*}
        \int_{\mathbb{R}^n} \vert \omega({\bf x})-\langle {\bf S}_\mu^{-1}{\bf f}, {\bf x} \rangle \vert^2 d\mu({\bf x}) = 0,
    \end{equation*}
  which is true if and only if $\omega ({\bf x}) = \langle {\bf S}_\mu^{-1}{\bf f}, {\bf x} \rangle$ for $\mu$ almost all ${\bf x} \in \mathbb{R}^n$.
\end{proof}

We then show that for a given probabilistic frame $\mu$ and its dual frame $T_{\#}\mu$ where $T: \mathbb{R}^n \rightarrow \mathbb{R}^n$ is measurable, their dual frame potential is minimized if and only if $T({\bf y}) =  {\bf S}_\mu^{-1}{\bf y}$ for $\mu$ almost all ${\bf y} \in \mathbb{R}^n$, which means that the probabilistic dual frame $T_{\#}\mu$ is indeed the canonical dual ${{\bf S}_\mu^{-1}}_\#\mu$ up to a $\mu$-null set. Therefore, the minimizer of the probabilistic dual frame potential among probabilistic dual frames of pushforward type is the canonical dual.

\begin{theorem}\label{DualFramePotential}
    Suppose $\mu$ is a probabilistic frame with frame operator ${\bf S}_\mu$ and   $T_{\#}\mu$ a probabilistic dual frame to $\mu$ where $T: \mathbb{R}^n \rightarrow \mathbb{R}^n$ is a measurable map. Then 
    \begin{equation*}
              \mathrm{PDFP}_\mu(T_{\#}\mu):= \int_{\mathbb{R}^n}  \int_{\mathbb{R}^n}  |\langle {\bf x}, T({\bf y}) \rangle|^{2} d\mu({\bf x}) d\mu({\bf y}) \geq n,
    \end{equation*}
     and the equality holds if and only if $T({\bf y}) =  {\bf S}_\mu^{-1}{\bf y}$ for $\mu$ almost all ${\bf y} \in \mathbb{R}^n$.
\end{theorem}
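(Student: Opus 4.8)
The plan is to carry out the inner integral first, rewrite the potential through the frame operator ${\bf S}_\mu$, and then diagonalize so that Proposition \ref{DualEnergy} can be applied one coordinate at a time. Fixing ${\bf y}$ and writing ${\bf w} = T({\bf y})$, I would compute
\[
\int_{\mathbb{R}^n} |\langle {\bf x}, {\bf w}\rangle|^2 \, d\mu({\bf x}) = {\bf w}^t \Big(\int_{\mathbb{R}^n} {\bf x}{\bf x}^t \, d\mu({\bf x})\Big){\bf w} = \langle {\bf S}_\mu {\bf w}, {\bf w}\rangle,
\]
so that $\mathrm{PDFP}_\mu(T_{\#}\mu) = \int_{\mathbb{R}^n} \langle {\bf S}_\mu T({\bf y}), T({\bf y})\rangle \, d\mu({\bf y})$. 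Next I would fix an orthonormal eigenbasis $\{{\bf u}_k\}_{k=1}^n$ of ${\bf S}_\mu$ with eigenvalues $\lambda_k > 0$ and expand the quadratic form as $\langle {\bf S}_\mu T({\bf y}), T({\bf y})\rangle = \sum_{k=1}^n \lambda_k\, \omega_k({\bf y})^2$, where $\omega_k({\bf y}) := \langle T({\bf y}), {\bf u}_k\rangle$. Each $\omega_k$ lies in $L^2(\mu,\mathbb{R})$ because $\int \omega_k^2 \, d\mu \le \int \Vert T\Vert^2 \, d\mu = M_2(T_{\#}\mu) < \infty$, so the potential becomes $\sum_{k=1}^n \lambda_k \int_{\mathbb{R}^n} \omega_k^2 \, d\mu$.

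The second step is to feed each $\omega_k$ into Proposition \ref{DualEnergy}. Reading the dual-frame constraint $\int {\bf x}\,T({\bf x})^t \, d\mu = {\bf Id}$ against ${\bf u}_k$ gives ${\bf u}_k = \int_{\mathbb{R}^n} {\bf x}\,\omega_k({\bf x}) \, d\mu({\bf x})$; that is, $\omega_k$ is an admissible reconstruction coefficient field for ${\bf f} = {\bf u}_k$. Proposition \ref{DualEnergy} then yields $\int \omega_k^2 \, d\mu \ge \int |\langle {\bf S}_\mu^{-1}{\bf u}_k, {\bf x}\rangle|^2 \, d\mu = \langle {\bf S}_\mu^{-1}{\bf u}_k, {\bf u}_k\rangle = \lambda_k^{-1}$. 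Summing against the weights $\lambda_k$ collapses the bound telescopically: $\mathrm{PDFP}_\mu(T_{\#}\mu) = \sum_k \lambda_k \int \omega_k^2 \, d\mu \ge \sum_k \lambda_k \cdot \lambda_k^{-1} = n$. This is the clean point of the argument: diagonalizing ${\bf S}_\mu$ makes each weight $\lambda_k$ cancel exactly against the canonical-dual energy $\lambda_k^{-1}$, which is why the weighting must be chosen adapted to ${\bf S}_\mu$ rather than to an arbitrary basis.

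I expect the equality analysis to be the delicate part. Since every $\lambda_k > 0$ and each summand already satisfies $\lambda_k \int \omega_k^2 \, d\mu \ge 1$, equality in the total forces equality in each summand, i.e. $\int \omega_k^2 \, d\mu = \lambda_k^{-1}$ for every $k$. Invoking the sharp clause of Proposition \ref{DualEnergy} gives $\omega_k({\bf x}) = \langle {\bf S}_\mu^{-1}{\bf u}_k, {\bf x}\rangle = \langle {\bf u}_k, {\bf S}_\mu^{-1}{\bf x}\rangle$ for $\mu$-a.e. ${\bf x}$, for each $k$; equivalently $\langle T({\bf x}) - {\bf S}_\mu^{-1}{\bf x}, {\bf u}_k\rangle = 0$ off a $\mu$-null set $N_k$. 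The step requiring care is passing from these $n$ separate almost-everywhere identities to a single one: on the complement of the finite union $\bigcup_{k=1}^n N_k$ (still $\mu$-null) all coordinates vanish simultaneously, so $T({\bf x}) = {\bf S}_\mu^{-1}{\bf x}$ for $\mu$-a.e. ${\bf x}$. The converse is the routine check that substituting $T({\bf x}) = {\bf S}_\mu^{-1}{\bf x}$ turns every inequality above into an equality; concretely $\int {\bf S}_\mu^{-1}{\bf x}\,{\bf x}^t {\bf S}_\mu^{-1} \, d\mu = {\bf S}_\mu^{-1}$ gives $\mathrm{PDFP}_\mu(T_{\#}\mu) = \mathrm{tr}({\bf S}_\mu {\bf S}_\mu^{-1}) = n$.
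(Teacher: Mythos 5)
Your argument is correct, and it reaches the bound by a genuinely different decomposition than the paper's. The paper fixes ${\bf x}$ and applies Proposition \ref{DualEnergy} to the coefficient field ${\bf y}\mapsto\langle {\bf x},T({\bf y})\rangle$ representing ${\bf x}$ (a continuum of applications of the lemma, one per ${\bf x}$), then integrates $d\mu({\bf x})$ and evaluates the resulting canonical-dual potential via Lemma \ref{traceEquality}; you instead integrate out ${\bf x}$ first to write $\mathrm{PDFP}_\mu(T_{\#}\mu)=\int\langle {\bf S}_\mu T({\bf y}),T({\bf y})\rangle\,d\mu({\bf y})$, diagonalize ${\bf S}_\mu$, and apply Proposition \ref{DualEnergy} only $n$ times, once to each coefficient field $\omega_k=\langle T(\cdot),{\bf u}_k\rangle$ representing the eigenvector ${\bf u}_k$, so that the weights $\lambda_k$ cancel against the canonical energies $\lambda_k^{-1}$. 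What your route buys is a cleaner equality analysis: since each of the $n$ summands is individually at least $1$, equality forces $\langle T({\bf x})-{\bf S}_\mu^{-1}{\bf x},{\bf u}_k\rangle=0$ off a null set for every $k$ of an orthonormal basis, and taking the finite union of null sets gives $T={\bf S}_\mu^{-1}$ $\mu$-a.e.\ with no further argument. The paper's equality step instead extracts a single ${\bf x}_0\in\supp(\mu)$ for which the pointwise inequality is saturated and appeals to the equality clause of its Equation (3.2); as literally stated for one fixed ${\bf x}_0$, that clause only controls the component of $T({\bf y})-{\bf S}_\mu^{-1}{\bf y}$ in the direction ${\bf x}_0$, and one really needs saturation for a spanning family of ${\bf x}$ (which does hold $\mu$-a.e., since $\supp(\mu)$ spans $\mathbb{R}^n$). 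Your basis-adapted version sidesteps this subtlety entirely, at the modest cost of the extra Fubini--Tonelli swap (harmless, as the integrand is nonnegative) and the spectral decomposition.
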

\begin{proof}
Since $T_{\#}\mu$ is a probabilistic dual frame to $\mu$, then for any ${\bf x} \in \mathbb{R}^n$, 
    \begin{equation*}
        {\bf x} = \int_{\mathbb{R}^n} {\bf y} \langle {\bf x}, T({\bf y}) \rangle  d\mu({\bf y}).
    \end{equation*}
    Note that $\langle {\bf x}, T(\cdot) \rangle: \mathbb{R}^n \rightarrow  \mathbb{R} \in L^2(\mu, \mathbb{R}^n)$, since
    \begin{equation*}
        \int_{\mathbb{R}^n} \vert \langle {\bf x}, T({\bf y}) \rangle \vert^2  d\mu({\bf y}) \leq \Vert {\bf x} \Vert^2    \int_{\mathbb{R}^n} \Vert T({\bf y}) \Vert^2  d\mu({\bf y}) < +\infty. 
    \end{equation*}
    Then, by Proposition \ref{DualEnergy}, we know that for any fixed ${\bf x} \in \mathbb{R}^n$,
    \begin{equation}\label{DualEnergy2}
         \int_{\mathbb{R}^n}  |\langle {\bf x}, T({\bf y}) \rangle|^2  d\mu({\bf y}) 
          \geq \int_{\mathbb{R}^n}  |\langle {\bf S}_\mu^{-1}{\bf x}, {\bf y} \rangle|^2  d\mu({\bf y}) = \int_{\mathbb{R}^n}  |\langle {\bf x}, {\bf S}_\mu^{-1}{\bf y} \rangle|^2  d\mu({\bf y}),
    \end{equation}
and the equality holds if and only if $T({\bf y}) =  {\bf S}_\mu^{-1}{\bf y}$ for $\mu$ almost all ${\bf y} \in \mathbb{R}^n$. Thus 
 \begin{equation*}
     \begin{split}
         \int_{\mathbb{R}^n} \int_{\mathbb{R}^n}  |\langle {\bf x}, T({\bf y}) \rangle|^2   d\mu({\bf y}) d\mu({\bf x}) 
       &\geq \int_{\mathbb{R}^n} \int_{\mathbb{R}^n}  |\langle {\bf x}, {\bf S}_\mu^{-1}{\bf y} \rangle|^2  d\mu({\bf y}) d\mu({\bf x}) \\
       &=  \int_{\mathbb{R}^n} \int_{\mathbb{R}^n}  |\langle {\bf S}_\mu^{-1/2}{\bf x}, {\bf S}_\mu^{-1/2}{\bf y} \rangle|^2 d\mu({\bf x}) d\mu({\bf y}) \\
       &= \int_{\mathbb{R}^n} \Vert {\bf S}_\mu^{-1/2}{\bf y} \Vert^2 d\mu({\bf y}) = n,
     \end{split}
\end{equation*}
where the second step is due to the symmetry of ${\bf S}_\mu^{-1/2}$, the third step follows from that ${{\bf S}_\mu^{-1/2}}_\#\mu$ is a Parseval frame, and the last identity comes from Lemma \ref{traceEquality}. The equality clearly holds if $T({\bf y}) =  {\bf S}_\mu^{-1}{\bf y}$ for $\mu$ almost all ${\bf y} \in \mathbb{R}^n$. Conversely, if the equality holds, then the equality in the first step must hold. Thus by Equation \ref{DualEnergy2}, we know that for $\mu$ almost all ${\bf x} \in \mathbb{R}^n$, 
 \begin{equation*}
       \int_{\mathbb{R}^n}  |\langle {\bf x}, T({\bf y}) \rangle|^2  d\mu({\bf y}) = \int_{\mathbb{R}^n}  |\langle {\bf x}, {\bf S}_\mu^{-1}{\bf y} \rangle|^2  d\mu({\bf y}).
\end{equation*}
In particular, at least for some ${\bf x}_0 \in \supp(\mu)$, we have
 \begin{equation*}
       \int_{\mathbb{R}^n}  |\langle {\bf x}_0, T({\bf y}) \rangle| ^2  d\mu({\bf y}) = \int_{\mathbb{R}^n} |\langle {\bf x}_0, {\bf S}_\mu^{-1}{\bf y} \rangle| ^2  d\mu({\bf y}),
\end{equation*}
which implies that $T({\bf y}) =  {\bf S}_\mu^{-1}{\bf y}$ for $\mu$ almost all ${\bf y} \in \mathbb{R}^n$ by the equality condition in Equation \ref{DualEnergy2}.
\end{proof}

We have the following corollary about the probabilistic $2p$-dual frame potential of pushforward type, where $p > 1$ and $|\supp(\mu)|$ is the cardinality of $\supp(\mu)$.

\begin{corollary}\label{2pDualPotential}
    Let $p > 1$. Suppose $\mu$ is a probabilistic frame and $T_{\#}\mu \in \mathcal{P}_2(\mathbb{R}^n)$ is a probabilistic dual frame of pushforward type for $\mu$ where $T: \mathbb{R}^n \rightarrow \mathbb{R}^n$. Then 
    \begin{equation*}
              \int_{\mathbb{R}^n}  \int_{\mathbb{R}^n} \vert \langle {\bf x}, T({\bf y}) \rangle \vert^{2p} d\mu({\bf x}) d\mu({\bf y}) \geq n^p,
    \end{equation*}
     where the equality does not hold when $n \geq 2$ or $|\supp(\mu) |\geq 3$. Furthermore, if $n \geq 2$ (or $|\supp(\mu) |\geq 3$) and $p \geq 1$, 
          \begin{equation*}
              \mu \otimes \mu {\text -} \mathrm{esssup} \ \{ |\langle {\bf x}, T({\bf y}) \rangle |^{2p}: {\bf x}, {\bf y} \in \mathbb{R}^{n}\} > n^p.
          \end{equation*}
\end{corollary}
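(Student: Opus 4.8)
The plan is to obtain the integral lower bound from Jensen's inequality together with \cref{DualFramePotential}, and then to reduce every (near-)equality scenario to a single rigidity statement: the quantity $|\langle {\bf x}, {\bf S}_\mu^{-1}{\bf y}\rangle|^2$ cannot be constant $\mu\otimes\mu$-almost everywhere once $n\ge 2$ or $|\supp(\mu)|\ge 3$. For the bound, since $t\mapsto t^{p}$ is convex for $p>1$, applying Jensen's inequality with respect to the probability measure $\mu\otimes\mu$ gives
\[
\int_{\mathbb{R}^n}\int_{\mathbb{R}^n}|\langle {\bf x}, T({\bf y})\rangle|^{2p}\,d\mu({\bf x})\,d\mu({\bf y}) \ge \left(\int_{\mathbb{R}^n}\int_{\mathbb{R}^n}|\langle {\bf x}, T({\bf y})\rangle|^{2}\,d\mu({\bf x})\,d\mu({\bf y})\right)^{p},
\]
and the inner probabilistic dual frame potential is $\ge n$ by \cref{DualFramePotential}, so the left-hand side is $\ge n^{p}$.

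For the equality analysis when $p>1$, set $g({\bf x},{\bf y}):=|\langle {\bf x}, T({\bf y})\rangle|^2$. Overall equality forces equality in both steps above: strict convexity of $t\mapsto t^{p}$ makes the Jensen step an equality only when $g$ is $\mu\otimes\mu$-a.e.\ constant, while equality in the second step means the $2$-potential equals $n$; together these give $g\equiv n$ a.e. Moreover, $\int\!\int g\,d\mu\,d\mu = n$ is precisely the equality case of \cref{DualFramePotential}, so $T({\bf y})={\bf S}_\mu^{-1}{\bf y}$ for $\mu$-a.e.\ ${\bf y}$. Hence $|\langle {\bf x},{\bf S}_\mu^{-1}{\bf y}\rangle|^2 = n$ for $\mu\otimes\mu$-a.e.\ $({\bf x},{\bf y})$.

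The heart of the proof is to rule this out, and this is the step I expect to be the main obstacle, since it is where the hypotheses $n\ge 2$ and $|\supp(\mu)|\ge 3$ are actually consumed. Substituting $u={\bf S}_\mu^{-1/2}{\bf x}$ and $v={\bf S}_\mu^{-1/2}{\bf y}$ turns the relation into $|\langle u,v\rangle|^2=n$ for $\tilde\mu\otimes\tilde\mu$-a.e.\ $(u,v)$, where $\tilde\mu=({\bf S}_\mu^{-1/2})_\#\mu$ is Parseval (as noted in the proof of \cref{DualFramePotential}) and thus satisfies $\spanning\{\supp(\tilde\mu)\}=\mathbb{R}^n$. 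Because $(u,v)\mapsto|\langle u,v\rangle|^2$ is continuous and the exceptional set is $\tilde\mu\otimes\tilde\mu$-null, the identity upgrades from ``a.e.'' to holding for \emph{every} $(u,v)\in\supp(\tilde\mu)\times\supp(\tilde\mu)$. Taking $u=v$ gives $\|u\|^2=\sqrt{n}$ for all $u\in\supp(\tilde\mu)$, and then for arbitrary $u,v\in\supp(\tilde\mu)$ the relation $|\langle u,v\rangle|=\|u\|\,\|v\|$ is the equality case of Cauchy--Schwarz, forcing $v=\pm u$. Hence $\supp(\tilde\mu)\subseteq\{u_0,-u_0\}$ for a single vector $u_0$, which spans only a line; this contradicts $\spanning\{\supp(\tilde\mu)\}=\mathbb{R}^n$ when $n\ge 2$, and forces $|\supp(\mu)|=|\supp(\tilde\mu)|\le 2$ when $n=1$ (as ${\bf S}_\mu^{-1/2}$ is a bijection), contradicting $|\supp(\mu)|\ge 3$.

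Finally, for the essential-supremum claim (valid for all $p\ge 1$), I would argue by contradiction. If $\mu\otimes\mu$-$\mathrm{esssup}\,\{g^{p}\}\le n^{p}$, then $g\le n$ $\mu\otimes\mu$-a.e., so $\int\!\int g\,d\mu\,d\mu\le n$; combined with $\int\!\int g\,d\mu\,d\mu\ge n$ from \cref{DualFramePotential}, this again forces $g\equiv n$ a.e., hence $T={\bf S}_\mu^{-1}$ a.e., and the same rigidity argument yields a contradiction whenever $n\ge 2$ or $|\supp(\mu)|\ge 3$. Therefore the essential supremum must strictly exceed $n^{p}$.
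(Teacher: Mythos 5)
Your proof is correct and follows essentially the same route as the paper's: Jensen's inequality on top of \cref{DualFramePotential}, then the rigidity argument via continuity on the support and the equality case of Cauchy--Schwarz (your change of variables by ${\bf S}_\mu^{-1/2}$ is the same computation as the paper's auxiliary inner product $\langle\cdot,\cdot\rangle_\mu$). Your contrapositive treatment of the essential-supremum claim for $p\ge 1$ is a slightly cleaner packaging of the paper's argument but carries identical content.
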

    \begin{proof} 
Since $\vert \cdot \vert^p: \mathbb{R} \rightarrow \mathbb{R}$ is strictly convex where $p > 1$, then by Jensen's inequality and Theorem \ref{DualFramePotential}, we have
     \begin{equation*}
          \int_{\mathbb{R}^n}  \int_{\mathbb{R}^n} \vert \langle {\bf x}, T({\bf y}) \rangle \vert^{2p} d\mu({\bf x}) d\mu({\bf y}) \geq \Big \vert  \int_{\mathbb{R}^n}  \int_{\mathbb{R}^n} \vert \langle {\bf x}, T({\bf y}) \rangle \vert ^2 d\mu({\bf x}) d\mu({\bf y})  \Big \vert^p \geq n^p.
      \end{equation*}
Furthermore, the equality in the first inequality holds if and only if for $\mu \otimes \mu$ almost all $({\bf x,y}) \in \mathbb{R}^n \times \mathbb{R}^n$,  $\vert \langle {\bf x}, T({\bf y}) \rangle  \vert$ is constant, 
and the equality in the second inequality holds if and only if for $\mu$ almost all ${\bf y} \in \mathbb{R}^n$, $T({\bf y}) =  {\bf S}_\mu^{-1}{\bf y}$.  
Therefore, the equality holds if and only if  for $\mu \otimes \mu$ almost all $({\bf x, y}) \in \mathbb{R}^n \times \mathbb{R}^n$, $\vert \langle {\bf x}, {\bf S}_\mu^{-1}{\bf y} \rangle  \vert = \sqrt{n}$. Since ${\bf S}_\mu^{-1}$ is positive definite, then one can define an inner product $\langle \cdot, \cdot \rangle_\mu: \mathbb{R}^n \times \mathbb{R}^n \rightarrow \mathbb{R}$ given by $\langle {\bf x}, {\bf y} \rangle_\mu = {\bf x}^t{\bf S}_\mu^{-1} {\bf y}$, and the reduced norm is given by 
$$\|{\bf x}\|_\mu = \sqrt{\langle {\bf x}, {\bf x} \rangle_\mu}.$$
Therefore, if $n \geq 2$ or $|\supp(\mu) |\geq 3$ and the equality holds,  then for $\mu \otimes \mu$ almost all $({\bf x, y}) \in \mathbb{R}^n \times \mathbb{R}^n$, $|\langle {\bf x}, {\bf y} \rangle_\mu|=\sqrt{n}$. Since the continuous map $({\bf x,y}) \mapsto |\langle {\bf x},{\bf S}_\mu^{-1}{\bf y}\rangle|$ is constant $\mu\otimes\mu$-almost everywhere, it must be constant on the support $\supp(\mu \otimes \mu) = \supp(\mu)\times\supp(\mu)$. Hence, for any ${\bf x, y} \in \supp(\mu)$, $\vert \langle {\bf x}, {\bf y} \rangle_\mu  \vert = \sqrt{n}$, $\vert \langle {\bf y}, {\bf y} \rangle_\mu  \vert = \sqrt{n}$ and $\vert \langle {\bf x}, {\bf x} \rangle_\mu  \vert = \sqrt{n}$.  
Then, $\|{\bf x}\|_\mu \|{\bf y}\|_\mu =\sqrt{n}= \vert \langle {\bf x}, {\bf y} \rangle_\mu  \vert $ and thus by Cauchy-Schwarz  inequality, there exists $c \in \mathbb{R}$ such that ${\bf y} = c{\bf x}$, which implies $|c| \|{\bf x}\|_\mu^2 = \sqrt{n}$ implies that $c=1$ or $c=-1$. Hence, $\supp(\mu)$ has at most two points, either $\supp(\mu) = \{{\bf x}\}$ or $\supp(\mu) = \{{\bf x},-{\bf x}\}$ for some ${\bf x} \in \mathbb{R}^n$, and thus the linear span of $\supp(\mu)$ is a one-dimensional subspace, which contradicts the assumption that $|\supp(\mu) |\geq 3$ or $\mu$ is a probabilistic frame when $n \geq 2$. Therefore, the equality does not hold when $n \geq 2$ or $|\supp(\mu) |\geq 3$. Furthermore, when $n \geq 2$ and $p>1$ or $|\supp(\mu) |\geq 3$ and $p>1$ , we have
     \begin{equation*}
              \mu \otimes \mu {\text -} \text{esssup} \ \{ |\langle {\bf x}, T({\bf y}) \rangle |^{2p}: {\bf x}, {\bf y} \in \mathbb{R}^{n}\}  \geq  \int_{\mathbb{R}^n}  \int_{\mathbb{R}^n} \vert \langle {\bf x}, T({\bf y}) \rangle \vert ^{2p} d\mu({\bf x}) d\mu({\bf y}) > n^p.
    \end{equation*}
For the $p=1$ case, i.e., $n \geq 2$ and $p=1$ or $|\supp(\mu) |\geq 3$ and $p=1$,  we have
    \begin{equation*}
              \mu \otimes \mu {\text -} \text{esssup} \ \{ |\langle {\bf x}, T({\bf y}) \rangle |^{2}: {\bf x}, {\bf y} \in \mathbb{R}^{n}\}  \geq  \int_{\mathbb{R}^n}  \int_{\mathbb{R}^n} \vert \langle {\bf x}, T({\bf y}) \rangle \vert ^{2} d\mu({\bf x}) d\mu({\bf y}) \geq n.
    \end{equation*}
Similarly, both equalities hold if and only if  for $\mu \otimes \mu$ almost all $({\bf x, y}) \in \mathbb{R}^n \times \mathbb{R}^n$, $\vert \langle {\bf x}, {\bf S}_\mu^{-1}{\bf y} \rangle  \vert$ is the constant $ \sqrt{n}$. Using a similar argument as above, we claim that if the equality holds, $\supp(\mu)$ must have at most two points and the linear span of $\supp(\mu)$ is one-dimensional, which contradicts the assumption that $|\supp(\mu) |\geq 3$ or $\mu$ is a probabilistic frame when $n \geq 2$. This completes the proof. 
\end{proof}

Finally, we arrive at the last main result of this paper. For a given probabilistic frame with bounds $A$ and $B$, we claim that the probabilistic dual frame potential is bounded below by $n \frac{A}{B}$. Since $A \leq B$, then this lower bound satisfies
$$
n \frac{A}{B} \leq n,
$$
where $n$ is precisely the minimal value of the probabilistic dual frame potential attained among all probabilistic dual frames of pushforward type in Theorem \ref{DualFramePotential}.  This is because there exist probabilistic dual frames of non-pushforward type, and the minimization of probabilistic dual frame potential in Theorem \ref{dualframepotential2} is among a larger set compared to the case in Theorem \ref{DualFramePotential}. However,  the effect of probabilistic dual frames of non-pushforward type becomes negligible when the given probabilistic frame is tight ($A=B$), since these two lower bounds are both $n$.

\begin{theorem}\label{dualframepotential2}
    Let $\mu$ be a probabilistic frame with frame bounds $0<A \leq B$ and let $\nu \in \mathcal{P}_2(\mathbb{R}^n)$ be a probabilistic dual frame to $\mu$. Then
          \begin{equation*}
              \mathrm{PDFP}_\mu(\nu) := \int_{\mathbb{R}^n}  \int_{\mathbb{R}^n} |\langle {\bf x}, {\bf y} \rangle |^2 d\mu({\bf x}) d\nu({\bf y}) \geq  \frac{A}{B} n,
          \end{equation*}
    and the equality holds if and only if $\mu$ is a tight probabilistic frame and $\nu = {{\bf S}^{-1}_\mu}_\# \mu$. 
\end{theorem}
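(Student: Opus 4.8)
The plan is to collapse the double integral into a single trace of a product of frame operators, bound that trace by a short chain of Loewner–order inequalities, and then reserve the real effort for the equality case, where coincidence of frame operators must be upgraded to an actual identity of measures.

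First I would rewrite the potential in terms of frame operators. For fixed ${\bf y}$ the inner integral is
\begin{equation*}
\int_{\mathbb{R}^n}|\langle{\bf x},{\bf y}\rangle|^2\,d\mu({\bf x}) = \int_{\mathbb{R}^n}{\bf y}^t{\bf x}{\bf x}^t{\bf y}\,d\mu({\bf x}) = {\bf y}^t{\bf S}_\mu{\bf y},
\end{equation*}
so integrating against $\nu$ and setting ${\bf S}_\nu := \int_{\mathbb{R}^n}{\bf y}{\bf y}^t\,d\nu({\bf y})$ yields the identity $\mathrm{PDFP}_\mu(\nu) = \trace({\bf S}_\mu{\bf S}_\nu)$. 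Since every probabilistic dual frame is a probabilistic frame, ${\bf S}_\nu$ is positive definite by Proposition \ref{TAcharacterization}.

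Next I would record the one constraint linking the two operators. Choosing $\gamma\in\Gamma(\mu,\nu)$ with $\int{\bf x}{\bf y}^t\,d\gamma = {\bf Id}$, the second--moment matrix of the pair $({\bf x},{\bf y})$ under $\gamma$ satisfies
\begin{equation*}
\int_{\mathbb{R}^n\times\mathbb{R}^n}\begin{pmatrix}{\bf x}\\{\bf y}\end{pmatrix}\begin{pmatrix}{\bf x}^t & {\bf y}^t\end{pmatrix}d\gamma = \begin{pmatrix}{\bf S}_\mu & {\bf Id}\\ {\bf Id} & {\bf S}_\nu\end{pmatrix}\succeq 0,
\end{equation*}
so, since ${\bf S}_\mu$ is positive definite, the Schur complement gives ${\bf S}_\nu\succeq{\bf S}_\mu^{-1}$. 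Combining this with the frame bounds $A{\bf Id}\preceq{\bf S}_\mu\preceq B{\bf Id}$, I would chain
\begin{equation*}
\trace({\bf S}_\mu{\bf S}_\nu)\ \geq\ A\,\trace({\bf S}_\nu)\ \geq\ A\,\trace({\bf S}_\mu^{-1})\ \geq\ \frac{A}{B}\,n,
\end{equation*}
where the first inequality is $\trace\big(({\bf S}_\mu - A{\bf Id}){\bf S}_\nu\big)\geq0$ (a trace of a product of positive semidefinite matrices), the second uses ${\bf S}_\nu\succeq{\bf S}_\mu^{-1}$, and the third uses that each eigenvalue of ${\bf S}_\mu^{-1}$ is at least $1/B$.

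For the equality case I would read off the three equality conditions in turn. Equality in the first step forces $\trace\big(({\bf S}_\mu - A{\bf Id}){\bf S}_\nu\big) = 0$; conjugating by the invertible ${\bf S}_\nu^{1/2}$ and using ${\bf S}_\nu\succ0$ yields ${\bf S}_\mu = A{\bf Id}$, while equality in the third step forces ${\bf S}_\mu = B{\bf Id}$. Hence $A = B$ and $\mu$ is tight, and equality in the second step then gives ${\bf S}_\nu = {\bf S}_\mu^{-1} = \tfrac1A{\bf Id}$. The main obstacle is the last mile: ${\bf S}_\nu = {\bf S}_\mu^{-1}$ only says the two frame operators coincide, not that $\nu$ is the canonical pushforward. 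I would close this gap with a variance computation on the coupling. Using $\trace({\bf S}_\mu) = An$, $\trace({\bf S}_\nu) = n/A$, and $\int\langle{\bf x},{\bf y}\rangle\,d\gamma = n$ from Lemma \ref{tracelemma}, I obtain
\begin{equation*}
\int_{\mathbb{R}^n\times\mathbb{R}^n}\|{\bf x} - A{\bf y}\|^2\,d\gamma = An - 2An + An = 0,
\end{equation*}
so ${\bf y} = \tfrac1A{\bf x} = {\bf S}_\mu^{-1}{\bf x}$ for $\gamma$--almost every $({\bf x},{\bf y})$. Thus $\gamma$ concentrates on the graph of ${\bf S}_\mu^{-1}$, and hence $\nu = {{\bf S}_\mu^{-1}}_\#\mu$ is exactly the canonical dual. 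The converse is the direct check that a tight $\mu$ paired with its canonical dual gives $\trace({\bf S}_\mu{\bf S}_\nu) = \trace({\bf Id}) = n = \tfrac{A}{B}n$.
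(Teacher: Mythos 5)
Your proof is correct, but it takes a genuinely different route from the paper's. The paper bounds the potential from below by $A\,M_2(\nu)$ using the lower frame inequality pointwise in ${\bf y}$, then controls $M_2(\nu)$ by applying the Cauchy--Schwarz inequality twice to the trace identity $\int \langle {\bf x},{\bf y}\rangle\, d\gamma = n$, obtaining $M_2(\nu) \geq n^2/M_2(\mu) \geq n/B$; its equality analysis then unwinds the two Cauchy--Schwarz equality conditions to show $\gamma$ is supported on the graph of $c\,{\bf Id}$. You instead collapse the potential to $\trace({\bf S}_\mu{\bf S}_\nu)$ and extract the Loewner inequality ${\bf S}_\nu \succeq {\bf S}_\mu^{-1}$ from the positive semidefiniteness of the joint second-moment matrix of $\gamma$ via a Schur complement --- an operator-level constraint on transport duals that the paper never states and that is of independent interest (it is strictly stronger than the paper's scalar consequence $\trace({\bf S}_\nu) \geq n^2/\trace({\bf S}_\mu)$, by AM--HM on the eigenvalues of ${\bf S}_\mu$). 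Your equality analysis is also cleaner: the three trace equalities pin down ${\bf S}_\mu = A{\bf Id} = B{\bf Id}$ and ${\bf S}_\nu = \tfrac1A{\bf Id}$, and the zero-variance computation $\int \|{\bf x}-A{\bf y}\|^2 d\gamma = 0$ replaces the paper's case analysis of when Cauchy--Schwarz is saturated. What the paper's route buys is elementariness (no block matrices or Schur complements) and a direct geometric picture of the optimal coupling; what yours buys is the reusable inequality ${\bf S}_\nu \succeq {\bf S}_\mu^{-1}$, the transparent fact that $\mathrm{PDFP}_\mu(\nu)$ depends on the pair only through the frame operators and the coupling constraint, and a shorter path through the equality case. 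One small point worth making explicit: the positive definiteness of ${\bf S}_\nu$ that you invoke for the equality step follows already from your own Schur complement bound ${\bf S}_\nu \succeq {\bf S}_\mu^{-1} \succ 0$, so you do not actually need to cite the fact that transport duals are probabilistic frames.
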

\begin{proof}
Since $\mu$ is a probabilistic frame with bounds $A$ and $B$, then
    \begin{equation*}
              \mathrm{PDFP}_\mu(\nu) := \int_{\mathbb{R}^n}  \int_{\mathbb{R}^n} |\langle {\bf x}, {\bf y} \rangle| ^2 d\mu({\bf x}) d\nu({\bf y}) \geq A \int_{\mathbb{R}^n}  \Vert {\bf y}  \Vert ^2  d\nu({\bf y}). 
          \end{equation*}
Since $\nu$ is a probabilistic dual frame to $\mu$, then by Lemma \ref{tracelemma}, there exists $\gamma \in \Gamma(\mu, \nu)$ such that
      \begin{equation*}
          \int_{\mathbb{R}^n \times \mathbb{R}^n} \langle {\bf x, y} \rangle d\gamma({\bf x, y}) = n.
      \end{equation*}
Then, using Cauchy-Schwarz inequality twice, we have
 \begin{equation}\label{CauchySchwarz }
 \begin{split}
      n = \int_{\mathbb{R}^n \times \mathbb{R}^n} \langle {\bf x, y} \rangle d\gamma({\bf x, y}) 
      & \leq \int_{\mathbb{R}^n \times \mathbb{R}^n} \| {\bf x} \| \| {\bf  y} \| d\gamma({\bf x, y}) \\
      & \leq \sqrt{\int_{\mathbb{R}^n}  \Vert {\bf x}  \Vert ^2  d\mu({\bf x}) \int_{\mathbb{R}^n}  \Vert {\bf y}  \Vert ^2  d\nu({\bf y})}.
 \end{split}
 \end{equation}
Therefore, we have
$$\int_{\mathbb{R}^n}  \Vert {\bf y}  \Vert ^2  d\nu({\bf y}) \geq \frac{n^2}{\int_{\mathbb{R}^n}  \Vert {\bf x}  \Vert ^2  d\mu({\bf x})} \geq \frac{n^2}{nB} = \frac{n}{B}, $$
and the last inequality above follows from the following fact:
\begin{equation} \label{eqn:2nd_moment}
    0<\int_{\mathbb{R}^n}  \Vert {\bf x}  \Vert ^2  d\mu({\bf x}) = \sum_{i=1}^n \int_{\mathbb{R}^n}   |\langle {\bf e}_i, {\bf x} \rangle|^2  d\mu({\bf x}) \leq \sum_{i=1}^n B \Vert{\bf e}_i \Vert^2 =  nB,
\end{equation} 
where $\{{\bf e}_i\}_{i=1}^n$ is the standard orthonormal basis in $\mathbb{R}^n$. 
Therefore, 
 \begin{equation*}
              \mathrm{PDFP}_\mu(\nu) = \int_{\mathbb{R}^n}  \int_{\mathbb{R}^n}  |\langle {\bf x}, {\bf y} \rangle|^2 d\mu({\bf x}) d\nu({\bf y})  \geq A \int_{\mathbb{R}^n}  \Vert {\bf y}  \Vert ^2  d\nu({\bf y}) \geq n \frac{A}{B}.
\end{equation*}

For the equality, if $\mu$ is a tight frame with bound $A>0$ and $\nu = {{\bf S}^{-1}_\mu}_\# \mu$, then ${\bf S}^{-1}_\mu = \frac{1}{A} {\bf Id}$ and thus
 \begin{equation*}
              \mathrm{PDFP}_\mu(\nu) = \frac{1}{A^2}\int_{\mathbb{R}^n}  \int_{\mathbb{R}^n}  |\langle {\bf x}, {\bf y} \rangle|^2 d\mu({\bf x}) d\mu({\bf y})  = \frac{1}{A} \int_{\mathbb{R}^n}  \Vert {\bf y}  \Vert ^2  d\mu({\bf y}) = n,
\end{equation*}
where the last two equalities follow from the tightness of $\mu$ and Equation \ref{eqn:2nd_moment}.

Conversely, if the equality holds, then for $\nu$ almost all ${\bf y}$, 
 \begin{equation*}
      \int_{\mathbb{R}^n} |\langle {\bf x}, {\bf y} \rangle| ^2 d\mu({\bf x})  = A  \Vert {\bf y}  \Vert ^2,
\end{equation*}
and 
$$\int_{\mathbb{R}^n}  \Vert {\bf y}  \Vert ^2  d\nu({\bf y}) = \frac{n^2}{\int_{\mathbb{R}^n}  \Vert {\bf x}  \Vert ^2  d\mu({\bf x})}  = \frac{n}{B},$$
which implies that the equalities in the above two Cauchy-Schwarz inequalities in Equation \ref{CauchySchwarz } must hold. Thus, for the first equality, we can claim that for $\gamma$ almost all $({\bf x, y}) \in \mathbb{R}^n \times \mathbb{R}^n$, there exists a constant $c_{\bf x} \geq 0$ that may be related to ${\bf x}$ such that ${\bf y} = c_{\bf x} {\bf x}$.
Then, $\gamma$ is supported on the graph of the map $T: \mathbb{R}^n \rightarrow \mathbb{R}^n$ where $T ({\bf{x}}) = c_{\bf{x}}{\bf{x}}$, that is to say, $\gamma = ({\bf{Id}}, T)_\#\mu$. For the second equality in Equation \ref{CauchySchwarz }, we claim that there exists a constant $c \geq 0$ such that $\Vert \cdot \Vert_{{\bf y}}= c \Vert \cdot  \Vert_{\bf x}$ in the sense that $\Vert \cdot \Vert_{{\bf y}}$ and $\Vert \cdot \Vert_{{\bf x}}$ are linearly dependent vectors in $L^2(\gamma, \mathbb{R}^n \times \mathbb{R}^n)$ where $\Vert ({\bf x, y}) \Vert_{{\bf y}} = \Vert {\bf y} \Vert$ and $\Vert ({\bf x, y}) \Vert_{{\bf x}} = \Vert {\bf x} \Vert$. 
Therefore, for $\gamma$ almost all $ ({\bf x}, {\bf y}) \in \mathbb{R}^n \times \mathbb{R}^n$, 
$${\bf y} = c_{\bf x} {\bf x} \ \text{and} \ \Vert {\bf y} \Vert= c \Vert {\bf x}  \Vert. $$
If $\bf x$ is nonzero, then $c_{\bf x}=c$ and if $\bf x =0$, redefining $c_{\bf x} =c$ doesn't affect the validity of the above two equations. Thus, for $\gamma$ almost all $ ({\bf x}, {\bf y}) \in \mathbb{R}^n \times \mathbb{R}^n$, $c_{\bf x}=c$, independent of $\bf x$, and ${\bf y} = c {\bf x}$. Then $\gamma = ({\bf Id}, c{\bf Id})_\#\mu \in \Gamma(\mu, (c{\bf Id})_\#\mu)$. Since $\gamma \in \Gamma(\mu, \nu)$, then $\nu = (c{\bf Id})_\#\mu$, which is a probabilistic dual frame of $\mu$ with respect to $\gamma = ({\bf Id}, c{\bf Id})_\#\mu$. Then
$${\bf Id} = \int_{\mathbb{R}^n \times \mathbb{R}^n} {\bf x} {\bf y}^t d\gamma({\bf x, y}) = c \int_{\mathbb{R}^n} {\bf x} {\bf x}^t d\mu({\bf x}) =c {\bf S}_{\mu},$$
which implies $c>0$ (otherwise $c=0$ implies $ {\bf Id} = {\bf 0}_{n \times n}$) and ${\bf S}_{\mu} = \frac{1}{c}{\bf Id}$. 
Hence, $\mu$ is a tight probabilistic frame with bound $\frac{1}{c}$ and $\nu = (c{\bf Id})_\#\mu= {{\bf S}^{-1}_\mu}_\# \mu$.
\end{proof}

Theorem \ref{dualframepotential2} says that the lower bound of probabilistic dual frame potential is saturated if and only if the given probabilistic frame is tight and the probabilistic dual frame is canonical, which is a ``mixture'' of the equality conditions for the frame potential and dual frame potential in Section \ref{introduction}. However, when the given probabilistic frame is tight, the equality conditions in Theorem \ref{DualFramePotential} and Theorem \ref{dualframepotential2} coincide: the equality holds if and only if the probabilistic dual frame is the canonical dual. We further have the following corollary about the probabilistic $2p$-dual frame potential where $p > 1$.

\begin{corollary}\label{pPotential}
       Let $p > 1$. Suppose $\mu$ is a probabilistic frame with frame bounds $0<A \leq B$  and $\nu$ is a probabilistic dual frame to $\mu$, then
          \begin{equation*}
               \int_{\mathbb{R}^n}  \int_{\mathbb{R}^n} \vert \langle {\bf x}, {\bf y} \rangle \vert ^{2p} d\mu({\bf x}) d\nu({\bf y}) \geq n^p \frac{A^p}{B^p},
          \end{equation*}
where the equality does not hold when $n \geq 2$ or $|\supp(\mu) |\geq 3$. Furthermore, if $n \geq 2$ (or $|\supp(\mu) |\geq 3$) and $p \geq 1$, 
          \begin{equation*}
              \mu \otimes \nu {\text -} \text{esssup} \ \{|\langle {\bf x, y} \rangle |^{2p}: {\bf x, y} \in \mathbb{R}^n\} > n^p \frac{A^p}{B^p}.
          \end{equation*}
\end{corollary}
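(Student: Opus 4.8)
The plan is to reduce the statement to Theorem~\ref{dualframepotential2} by a convexity argument, exactly parallel to the way Corollary~\ref{2pDualPotential} is deduced from Theorem~\ref{DualFramePotential}. First I would apply Jensen's inequality to the convex map $|\cdot|^p$ (strictly convex for $p>1$) over the probability measure $\mu\otimes\nu$ with nonnegative integrand $|\langle\mathbf{x},\mathbf{y}\rangle|^2$, obtaining
\begin{equation*}
\int_{\mathbb{R}^n}\int_{\mathbb{R}^n}|\langle\mathbf{x},\mathbf{y}\rangle|^{2p}\,d\mu(\mathbf{x})\,d\nu(\mathbf{y})\ \geq\ \Big(\int_{\mathbb{R}^n}\int_{\mathbb{R}^n}|\langle\mathbf{x},\mathbf{y}\rangle|^{2}\,d\mu(\mathbf{x})\,d\nu(\mathbf{y})\Big)^p=\mathrm{PDFP}_\mu(\nu)^p\ \geq\ \Big(\frac{A}{B}n\Big)^p,
\end{equation*}
where the final inequality is Theorem~\ref{dualframepotential2}. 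This yields the claimed bound $n^pA^p/B^p$ at once.

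Next I would analyze equality, which forces equality in \emph{both} steps. For $p>1$, strict convexity makes Jensen's equality equivalent to $|\langle\mathbf{x},\mathbf{y}\rangle|$ being constant for $\mu\otimes\nu$-almost all $(\mathbf{x},\mathbf{y})$, while equality in Theorem~\ref{dualframepotential2} forces $\mu$ to be tight and $\nu$ to be the canonical dual ${{\bf S}_\mu^{-1}}_{\#}\mu$. Since $\mu$ is tight, ${\bf S}_\mu=A\,{\bf Id}$, so $\nu=(\tfrac1A{\bf Id})_{\#}\mu$ and $\supp(\nu)=\tfrac1A\supp(\mu)$; the key point, and what distinguishes this from the pushforward case of Corollary~\ref{2pDualPotential}, is that one must first invoke canonicity to identify $\supp(\nu)$ before the constant-modulus condition can be read as a statement about $\supp(\mu)$ alone.

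The heart of the argument is then the support step. Because $(\mathbf{x},\mathbf{y})\mapsto|\langle\mathbf{x},\mathbf{y}\rangle|$ is continuous and constant $\mu\otimes\nu$-a.e., it is constant on $\supp(\mu\otimes\nu)=\supp(\mu)\times\supp(\nu)$, and absorbing the factor $1/A$ shows that $|\langle\mathbf{x},\mathbf{x}'\rangle|$ equals a fixed constant $c_0$ for all $\mathbf{x},\mathbf{x}'\in\supp(\mu)$. Setting $\mathbf{x}=\mathbf{x}'$ gives $\|\mathbf{x}\|^2=c_0$ for every support vector, whence $|\langle\mathbf{x},\mathbf{x}'\rangle|=c_0=\|\mathbf{x}\|\,\|\mathbf{x}'\|$ saturates Cauchy--Schwarz and forces $\mathbf{x}'=\pm\mathbf{x}$. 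Thus $\supp(\mu)\subseteq\{\mathbf{v},-\mathbf{v}\}$ and $\spanning\{\supp(\mu)\}$ is one-dimensional, contradicting $|\supp(\mu)|\geq 3$ and, via Proposition~\ref{TAcharacterization}, the frame property when $n\geq 2$. Hence equality is impossible in these cases. I expect this Cauchy--Schwarz/support contradiction, together with the bookkeeping that both equality conditions are forced simultaneously and transported correctly to $\supp(\mu)\times\supp(\mu)$, to be the main obstacle.

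Finally, for the essential-supremum statement (all $p\geq1$), I would observe that $\mu\otimes\nu\text{-}\mathrm{esssup}\,\{|\langle\mathbf{x},\mathbf{y}\rangle|^{2p}\}\geq\int_{\mathbb{R}^n}\int_{\mathbb{R}^n}|\langle\mathbf{x},\mathbf{y}\rangle|^{2p}\,d\mu(\mathbf{x})\,d\nu(\mathbf{y})\geq n^pA^p/B^p$. If the essential supremum met the bound, then (the esssup of a nonnegative function equalling its integral) the integrand would be constant $\mu\otimes\nu$-a.e.\ and the integral would also attain $n^pA^p/B^p$; the latter returns us to the tight-plus-canonical case, and the constant-modulus condition again drives the same one-dimensional span contradiction. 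This handles the endpoint $p=1$ without needing strict convexity, completing the plan.
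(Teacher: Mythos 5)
Your proposal is correct and follows essentially the same route as the paper's proof: Jensen's inequality on $|\cdot|^p$ combined with Theorem \ref{dualframepotential2}, then the equality analysis forcing tightness plus canonicity, transporting the constant-modulus condition to $\supp(\mu)\times\supp(\nu)=\supp(\mu)\times\tfrac1A\supp(\mu)$ by continuity, and the Cauchy--Schwarz argument showing $\supp(\mu)\subseteq\{\mathbf{v},-\mathbf{v}\}$, which contradicts the frame property for $n\geq 2$ or the cardinality hypothesis. The essential-supremum endpoint $p=1$ is also handled exactly as in the paper, via the chain $\mathrm{esssup}\geq\text{integral}\geq n^pA^p/B^p$ and the same contradiction.
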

\begin{proof}
Since $\vert \cdot \vert^p: \mathbb{R} \rightarrow \mathbb{R}$ is strictly convex when $p > 1$, then by Jensen's inequality and Theorem \ref{dualframepotential2}, we have
     \begin{equation*}
          \int_{\mathbb{R}^n}  \int_{\mathbb{R}^n} \vert \langle {\bf x}, {\bf y} \rangle \vert^{2p} d\mu({\bf x}) d\nu({\bf y}) \geq \Big \vert  \int_{\mathbb{R}^n}  \int_{\mathbb{R}^n} \vert \langle {\bf x}, {\bf y} \rangle \vert ^2 d\mu({\bf x}) d\nu({\bf y})  \Big \vert^p \geq n^p \frac{A^p}{B^p}.
      \end{equation*}
Furthermore,  the equality in the first inequality holds if and only if for $\mu \otimes \nu$ almost all $({\bf x, y}) \in \mathbb{R}^n \times \mathbb{R}^n$, $\vert \langle {\bf x}, {\bf y} \rangle  \vert$ is constant, and the equality in the second inequality holds if and only if $\mu$ is a tight probabilistic frame and $\nu = {{\bf S}^{-1}_\mu}_\# \mu$. Then both equalities hold if and only if ${\bf S}_\mu = A {\bf Id}$, $\nu = (\frac{1}{A} {\bf Id})_\# \mu $, and for $\mu \otimes \nu$ almost all $({\bf x, y}) \in \mathbb{R}^n \times \mathbb{R}^n$, $\vert \langle {\bf x}, {\bf y} \rangle  \vert = \sqrt{n}$.
Since the continuous map $({\bf x,y}) \mapsto |\langle {\bf x}, {\bf y}\rangle|$ is constant $\mu\otimes\nu$-almost everywhere, it must be constant on its support $\supp(\mu \otimes \nu) = \supp(\mu)\times\supp(\nu)$.
Note that if ${\bf x} \in \supp(\mu)$, then $\frac{1}{A}{\bf x} \in \supp(\nu)$. Therefore, if $|\supp(\mu) |\geq 3$ (or $n \geq 2$) and the equality holds, then for any ${\bf x, y} \in \supp(\mu)$, $\vert \langle {\bf x}, \frac{1}{A}{\bf y} \rangle  \vert = \sqrt{n}$, $\vert \langle {\bf x}, \frac{1}{A}{\bf x} \rangle  \vert = \sqrt{n}$ and $\vert \langle {\bf y}, \frac{1}{A}{\bf y} \rangle  \vert = \sqrt{n}$, implying  $\| {\bf x} \| = \| {\bf y} \|  = \sqrt{A} n^{1/4}$ and thus $\|{\bf x}\| \|{\bf y}\| =A \sqrt{n}= \vert \langle {\bf x}, {\bf y} \rangle  \vert $. By Cauchy-Schwarz  inequality, we must have ${\bf y} =  {\bf x}$ or ${\bf y} = - {\bf x}$. Therefore, $\supp(\mu)$ has at most two points, either $\supp(\mu) = \{{\bf x}\}$ or $\supp(\mu) = \{{\bf x},-{\bf x}\}$, and the linear span of $\supp(\mu)$ is one-dimensional, which contradicts the assumption that $|\supp(\mu) |\geq 3$ or $\mu$ is a probabilistic frame when $n \geq 2$. Therefore, the equality does not hold when $n \geq 2$ or $|\supp(\mu) |\geq 3$. Moreover, when $n \geq 2$ (or $|\supp(\mu) |\geq 3$) and $p>1$, we have
     \begin{equation*}
              \mu \otimes \nu {\text -} \text{esssup} \ \{|\langle {\bf x, y} \rangle |^{2p}: {\bf x, y} \in \mathbb{R}^n\} \geq  \int_{\mathbb{R}^n}  \int_{\mathbb{R}^n} \vert \langle {\bf x}, {\bf y} \rangle \vert ^{2p} d\mu({\bf x}) d\nu({\bf y}) > n^p \frac{A^p}{B^p}.
          \end{equation*}
For the $p=1$ case, i.e. $n \geq 2$ (or $|\supp(\mu) |\geq 3$) and $p=1$, we have
 \begin{equation*}
               \mu \otimes \nu {\text -} \text{esssup} \ \{|\langle {\bf x, y} \rangle |^{2}: {\bf x, y} \in \mathbb{R}^n\} \geq  \int_{\mathbb{R}^n}  \int_{\mathbb{R}^n} \vert \langle {\bf x}, {\bf y} \rangle \vert ^{2} d\mu({\bf x}) d\nu({\bf y}) \geq n \frac{A}{B}.
\end{equation*}
Similarly, both equalities hold if and only if for $\mu \otimes \nu$ almost all $({\bf x, y}) \in \mathbb{R}^n \times \mathbb{R}^n$, $\vert \langle {\bf x}, {\bf y} \rangle  \vert=\sqrt{n}$, ${\bf S}_\mu = A {\bf Id}$, and $\nu = (\frac{1}{A} {\bf Id})_\# \mu $. And we get an analogous contradiction by using the same argument above, if the equality holds. 
\end{proof}

We finish this section by considering the case when $n=1$ and $|\supp(\mu)|  \leq 2$. When $n=1$ and $\supp(\mu)=\{z\}$ where $z \neq 0$, we claim that both equalities in the probabilistic $p$-dual frame potential where $p > 1$ and the associated supremum dual frame potential hold if and only if the probabilistic dual frame is the canonical dual. In addition, when $n=1$ and  $\supp(\mu)=\{z, -z\}$, we claim that the canonical dual frame condition for equality saturation holds again in the probabilistic $2p$-dual frame potential and the associated supremum dual frame potential where $p \geq 1$.
 \begin{example}\label{OneSupport}
 \normalfont
     Note that any $\mu = \delta_z$ where $z \neq 0 \in \mathbb{R}$ is a tight probabilistic frame for the real line. And the set of probabilistic dual frames of $\mu = \delta_z$ is the collection of probability measures on $\mathbb{R}$ with mean value $\frac{1}{z}$ and finite second moments. Indeed, let $\nu \in \mathcal{P}_2(\mathbb{R})$ be a probabilistic dual frame to $\mu$ with respect to $\gamma \in \Gamma(\mu, \nu)$. Since $\mu=\delta_z$ is the delta measure at $z$, then $\gamma \in \Gamma(\mu, \nu)$ is unique and is given by the product measure $\gamma = \mu \otimes \nu$. Therefore, 
    \begin{equation*}
        \int_{\mathbb{R}} \int_{\mathbb{R}} xy d\mu(x) d\nu(y) = z \int_{\mathbb{R} } y d\nu(y) =1.
    \end{equation*}
     Hence, the mean of $\nu$ is $\frac{1}{z}$. Note that 
     $|\cdot|^p: \mathbb{R} \rightarrow \mathbb{R}$ is convex where $p \geq 1$, then by Jensen's inequality, we have 
    \begin{equation*}
       \int_{\mathbb{R}}  \int_{\mathbb{R}} |xy|^{p} d\mu({x}) d\nu({y}) \geq  \left|\int_{\mathbb{R}}  \int_{\mathbb{R}} xy d\mu({x}) d\nu({y}) \right|^p = 1,
    \end{equation*}
and when $p > 1$ ($|\cdot|^p: \mathbb{R} \rightarrow \mathbb{R}$ is strictly convex), the equality holds if and only if for $\mu \otimes \nu$ almost all $(x,y) \in \mathbb{R} \times \mathbb{R}$, $xy=1$. Since $\mu = \delta_z$, this means
$y = \frac{1}{z}$ $\nu$-almost everywhere, which is equivalent to
$\nu = {{ S}_\mu^{-1}}_\#{\mu} = \delta_{\frac{1}{z}}$. Thus, when $p > 1$,
    \begin{equation*}
       \int_{\mathbb{R}}  \int_{\mathbb{R}} |xy|^{p} d\mu({x}) d\nu({y}) \geq   1,
    \end{equation*}
and the equality holds if and only if 
$\nu = {{ S}_\mu^{-1}}_\#{\mu} = \delta_{\frac{1}{z}}$.  
Furthermore, when $p \geq 1$, 
\begin{equation*}
     \mu \otimes \nu {\text -} \text{esssup} \ \{|x y|^{p}: x, y \in \mathbb{R}\} \geq  \int_{\mathbb{R}}  \int_{\mathbb{R}} |xy|^{p} d\mu({x}) d\nu({y}) \geq 1,
\end{equation*}
and when $p>1$, both equalities hold if and only if for $\mu \otimes \nu$ almost all $(x, y) \in \mathbb{R} \times \mathbb{R}$, $|xy|=1$, and $\nu = {{ S}_\mu^{-1}}_\#{\mu} = \delta_{\frac{1}{z}}$. Since $\mu = \delta_z$ and $\nu = \delta_{\frac{1}{z}}$, the first condition is unnecessary and thus when $p > 1$, 
\begin{equation*}
      \mu \otimes \nu {\text -} \text{esssup} \ \{|x y|^{p}: x, y \in \mathbb{R}\}  \geq 1,
\end{equation*}
and the equality holds if and only if $\nu = {{S}_\mu^{-1}}_\#{\mu} = \delta_{\frac{1}{z}}$.
 \end{example}

 \begin{example}
 \normalfont
    Let $0<w<1$ and $\mu = w \delta_z +(1-w) \delta_{-z}$ where $z \neq 0 \in \mathbb{R}$. Clearly, $\mu$ is a tight probabilistic frame for the real line.
    Now let $p > 1$ and $\nu$ be a probabilistic dual frame for $\mu$. Since
     $|\cdot|^p: \mathbb{R} \rightarrow \mathbb{R}$ is strictly convex, then by Jensen's inequality and Theorem \ref{dualframepotential2}, 
    \begin{equation*}
       \int_{\mathbb{R}}  \int_{\mathbb{R}} |xy|^{2p} d\mu({x}) d\nu({y}) \geq    \left|\int_{\mathbb{R}}\int_{\mathbb{R}} x^{2}y^{2} d\mu({x}) d\nu({y}) \right|^p  \geq 1,
    \end{equation*}
and both equalities hold if and only if for $\mu \otimes \nu$ almost all $(x, y) \in \mathbb{R} \times \mathbb{R}$, $|xy|=1$, and $\nu = {{S}_\mu^{-1}}_\#{\mu} = w \delta_{\frac{1}{z}} +(1-w) \delta_{-\frac{1}{z}}$, which is equivalent to $\nu =  {{S}_\mu^{-1}}_\#{\mu}$ since $\mu = w \delta_z +(1-w) \delta_{-z}$. By Theorem \ref{dualframepotential2}, we still have the above result when $p=1$. Therefore, when $p \geq 1$, we have
 \begin{equation*}
       \int_{\mathbb{R}}  \int_{\mathbb{R}} |xy|^{2p} d\mu({x}) d\nu({y}) \geq 1,
    \end{equation*}
and the equality holds if and only if  $\nu =  {{S}_\mu^{-1}}_\#{\mu} = w \delta_{\frac{1}{z}} +(1-w) \delta_{-\frac{1}{z}}$.
Similarly, 
\begin{equation*}
     \mu \otimes \nu {\text -} \text{esssup} \ \{|xy|^{2p}: x, y \in \mathbb{R}\} \geq  \int_{\mathbb{R}}  \int_{\mathbb{R}} |xy|^{2p} d\mu({x}) d\nu({y}) \geq 1,
\end{equation*}
and both equalities hold again if and only if for $\mu \otimes \nu$ almost all $(x, y) \in \mathbb{R} \times \mathbb{R}$, $|xy|=1$, and $\nu = {{S}_\mu^{-1}}_\#{\mu} = w \delta_{\frac{1}{z}} +(1-w) \delta_{-\frac{1}{z}}$. Therefore, when $p \geq 1$, after dropping the unnecessary constant condition, we have
\begin{equation*}
     \mu \otimes \nu {\text -} \text{esssup} \ \{|xy|^{2p}: x, y \in \mathbb{R}\} \geq 1,
\end{equation*}
and the equality holds if and only if $\nu = {{S}_\mu^{-1}}_\#{\mu} = w \delta_{\frac{1}{z}} +(1-w) \delta_{-\frac{1}{z}}$.
 \end{example}

\section{Concluding Questions}\label{section4}
In this section, we conclude the paper with the last proposition, which can be used to rewrite the probabilistic (dual) frame potential in terms of the $p$-Wasserstein distance $W_p(\cdot, \cdot)$. This may provide a new clue to the minimization of many kinds of frame potentials from the perspectives of optimal transport and Wasserstein distance.  We use $\pi_{{\bf x}^{\perp}}$ to denote the orthogonal projection onto the plane ${\bf x}^{\perp}$ of vectors perpendicular to ${\bf x} \in \mathbb{R}^n$, and $(\pi_{{\bf x}^{\perp}})_{\#}$ the associated pushforward on measures.
\begin{proposition}[Proposition 1.2 in \cite{chen2025probabilistic}]\label{Was_Hyperplane_Frame}
Suppose $\mu \in \mathcal{P}_p(\mathbb{R}^n)$ with $p \geq 1$. Then for any point ${\bf x}$ on the unit sphere $\mathbb{S}^{n-1}$, we have
$$W_p^p(\mu, (\pi_{{\bf x}^{\perp}})_{\#}\mu)=\int_{\mathbb{R}^n} |\langle{\bf x}, {\bf y} \rangle |^p d\mu({\bf y}).$$
\end{proposition}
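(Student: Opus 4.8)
The plan is to produce a transport map that realizes the right-hand side and then confirm it is optimal through a lower bound that holds for every coupling. First I would record the explicit form of the projection: since ${\bf x} \in \mathbb{S}^{n-1}$, the orthogonal projection onto ${\bf x}^\perp$ is
\[
\pi_{{\bf x}^\perp}({\bf y}) = {\bf y} - \langle {\bf x}, {\bf y}\rangle {\bf x},
\]
so that ${\bf y} - \pi_{{\bf x}^\perp}({\bf y}) = \langle {\bf x}, {\bf y}\rangle {\bf x}$ and hence $\| {\bf y} - \pi_{{\bf x}^\perp}({\bf y})\| = |\langle {\bf x}, {\bf y}\rangle|$, using $\|{\bf x}\| = 1$. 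Integrability is immediate: by Cauchy--Schwarz $|\langle {\bf x}, {\bf y}\rangle|^p \le \|{\bf y}\|^p$, and the right-hand side is $\mu$-integrable because $\mu \in \mathcal{P}_p(\mathbb{R}^n)$.

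For the upper bound I would take the graph coupling $\gamma_0 := ({\bf Id}, \pi_{{\bf x}^\perp})_\# \mu \in \Gamma(\mu, (\pi_{{\bf x}^\perp})_\# \mu)$, which is admissible by construction. Its cost is
\[
\int_{\mathbb{R}^n \times \mathbb{R}^n} \|{\bf u} - {\bf v}\|^p \, d\gamma_0({\bf u}, {\bf v}) = \int_{\mathbb{R}^n} \|{\bf y} - \pi_{{\bf x}^\perp}({\bf y})\|^p \, d\mu({\bf y}) = \int_{\mathbb{R}^n} |\langle {\bf x}, {\bf y}\rangle|^p \, d\mu({\bf y}),
\]
so taking the infimum over $\Gamma(\mu, (\pi_{{\bf x}^\perp})_\# \mu)$ gives $W_p^p(\mu, (\pi_{{\bf x}^\perp})_\# \mu) \le \int_{\mathbb{R}^n} |\langle {\bf x}, {\bf y}\rangle|^p \, d\mu({\bf y})$.

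The lower bound is where the geometry carries the argument, and matching it is the only point requiring care. The key observation is that the second marginal $(\pi_{{\bf x}^\perp})_\# \mu$ is concentrated on the hyperplane ${\bf x}^\perp$, since $\pi_{{\bf x}^\perp}$ takes values there. Hence for \emph{every} coupling $\gamma \in \Gamma(\mu, (\pi_{{\bf x}^\perp})_\# \mu)$ and $\gamma$-almost every $({\bf y}, {\bf z})$ one has $\langle {\bf x}, {\bf z}\rangle = 0$, and therefore
\[
\|{\bf y} - {\bf z}\| \ge |\langle {\bf x}, {\bf y} - {\bf z}\rangle| = |\langle {\bf x}, {\bf y}\rangle|,
\]
the inequality being Cauchy--Schwarz against the unit vector ${\bf x}$. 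Raising to the $p$th power, integrating against $\gamma$, and using that the first marginal of $\gamma$ equals $\mu$ yields
\[
\int_{\mathbb{R}^n \times \mathbb{R}^n} \|{\bf y} - {\bf z}\|^p \, d\gamma({\bf y}, {\bf z}) \ge \int_{\mathbb{R}^n} |\langle {\bf x}, {\bf y}\rangle|^p \, d\mu({\bf y}).
\]
Since this bound is uniform in $\gamma$, the infimum defining $W_p^p$ is bounded below by the same integral, and combining with the upper bound gives the claimed identity (and shows $\gamma_0$ is optimal). I do not expect a genuine obstacle; the only subtlety is that optimality must be verified against all couplings, which is exactly what the support constraint on the second marginal supplies.
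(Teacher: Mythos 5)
Your proof is correct and complete: the graph coupling $({\bf Id},\pi_{{\bf x}^\perp})_\#\mu$ gives the upper bound, and the observation that every coupling's second marginal is concentrated on ${\bf x}^\perp$ — so that $\|{\bf y}-{\bf z}\|\ge|\langle{\bf x},{\bf y}-{\bf z}\rangle|=|\langle{\bf x},{\bf y}\rangle|$ holds $\gamma$-almost everywhere — yields a matching lower bound uniform over all couplings. Note that this paper does not prove the proposition itself; it imports it by citation from \cite{chen2025probabilistic}, so there is no in-paper argument to compare against, but your route is the standard one for this identity and I see no gap.
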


If the probabilistic frame is supported on the unit sphere $\mathbb{S}^{n-1}$, the authors in \cite{chen2025probabilistic} rewrote the minimization problem of probabilistic $p$-frame potential as
\begin{equation*}
  \underset{\mu \in \cP(\mathbb{S}^{n-1})}{\inf}  \iint_{(\mathbb{S}^{n-1})^2}  \vert \left\langle {\bf x},{\bf y} \right\rangle \vert^p d\mu({\bf y}) d\mu({\bf x}) = \underset{\mu \in \cP(\mathbb{S}^{n-1})}{\inf}   \int_{\mathbb{S}^{n-1}} W^p_p(\mu, (\pi_{{\bf x}^{\perp}})_{\#}\mu) \ d\mu({\bf x}).
\end{equation*}
Significant progress has been made on this question when $p>0$ is even \cite{ehler2012minimization, wickman2023gradient, bilyk2022optimal}. And when $n \geq 2$ and $p>0$ is not even, the authors in \cite{bilyk2022optimal} conjectured that the optimizer is a finite discrete measure on the unit sphere $\mathbb{S}^{n-1}$.  

This paper gives a lower bound for the probabilistic $p$-dual frame potential in Theorem \ref{dualframepotential2} and Corollary \ref{pPotential} where $p \geq 1$ is even. And when $p \geq 1$ is non-even, the problem is generally open. 
However, if the frame $\mu$ is supported on $\mathbb{S}^{n-1}$,  the probabilistic $p$-dual frame potential where $p \geq 1$ is not even can be written as 
\begin{equation*}
 \int_{\mathbb{S}^{n-1}} \int_{\mathbb{R}^n}  |\left\langle {\bf x},{\bf y} \right\rangle|^p d\nu({\bf y}) d\mu({\bf x}) =   \int_{\mathbb{S}^{n-1}} W^p_p(\nu, (\pi_{{\bf x}^{\perp}})_{\#}\nu) \ d\mu({\bf x}),
\end{equation*}
which may shed new light on the lower bound of this probabilistic $p$-dual frame potential from the perspectives of optimal transport and Wasserstein distance.

\section{Proofs of Proposition \ref{denseProperty} and Proposition \ref{dualperturbation}}\label{ProofOfLemmas}
\begin{proof}[Proof of Proposition \ref{denseProperty}]
Note that for any ${\bf f}, {\bf g} \in D$, 
      \begin{equation*}
      \begin{split}
          \langle {\bf f}, {\bf g} \rangle & = \frac{1}{2} (\Vert {\bf f}+{\bf g} \Vert^2 -\Vert {\bf f}\Vert^2 - \Vert {\bf g} \Vert^2) \\
         &=  \frac{1}{2} \int_{\mathbb{R}^n \times \mathbb{R}^n} \langle {\bf f+g}, {\bf x} \rangle \langle {\bf y}, {\bf f+g}\rangle - \langle {\bf f}, {\bf x} \rangle \langle {\bf y}, {\bf f}\rangle -  \langle {\bf g}, {\bf x} \rangle \langle {\bf y}, {\bf g}\rangle d\gamma({\bf x, y}) \\
         &= \frac{1}{2} \int_{\mathbb{R}^n \times \mathbb{R}^n} \langle {\bf f}, {\bf x} \rangle \langle {\bf y}, {\bf g}\rangle + \langle {\bf g}, {\bf x} \rangle \langle {\bf y}, {\bf f} \rangle d\gamma({\bf x, y}).
      \end{split}
     \end{equation*}
     Since $\int_{\mathbb{R}^n \times \mathbb{R}^n} {\bf x}{\bf y}^t d\gamma({\bf x, y})$ is symmetric,  then for any ${\bf f}, {\bf g} \in D$, 
    \begin{equation*}
         \langle {\bf f}, {\bf g} \rangle = \int_{\mathbb{R}^n \times \mathbb{R}^n} \langle {\bf f}, {\bf x} \rangle \langle {\bf y}, {\bf g}\rangle d\gamma({\bf x},{\bf y}). 
     \end{equation*}
     Since $D$ is dense in $\mathbb{R}^n$, then for any ${\bf f}, {\bf g} \in \mathbb{R}^n$, there exist $\{ {\bf f}_i \}$, $\{{\bf g}_j\} \subset D$ such that ${\bf f}_i \rightarrow {\bf f}$ and ${\bf g}_j \rightarrow {\bf g}$. Then
     \begin{equation*}
     \begin{split}
         \langle {\bf f}, {\bf g} \rangle = \lim_{i \rightarrow \infty} \lim_{j \rightarrow \infty} \langle {\bf f}_i, {\bf g}_j \rangle &= \lim_{i \rightarrow \infty} \lim_{j \rightarrow \infty}
 \int_{\mathbb{R}^n \times \mathbb{R}^n} \langle {\bf f}_i, {\bf x} \rangle \langle {\bf y}, {\bf g}_j\rangle d\gamma({\bf x,y}) \\
    &= 
 \int_{\mathbb{R}^n \times \mathbb{R}^n} \lim_{i \rightarrow \infty} \lim_{j \rightarrow \infty}  \langle {\bf f}_i, {\bf x} \rangle \langle {\bf y}, {\bf g}_j\rangle d\gamma({\bf x,y}) \\
 &= 
 \int_{\mathbb{R}^n \times \mathbb{R}^n} \langle {\bf f}, {\bf x} \rangle \langle {\bf y}, {\bf g}\rangle d\gamma({\bf x,y}). \\
     \end{split}
     \end{equation*}
Thus, by Lemma \ref{DualFrameEquivalence},  $\nu$ is a probabilistic dual frame of $\mu$ with respect to $\gamma \in \Gamma(\mu, \nu)$. Note that we can exchange the limit and integral in the above, since for any ${\bf p} \in \mathbb{R}^n$, $$\int_{\mathbb{R}^n \times \mathbb{R}^n} \langle {\bf p}, {\bf x} \rangle \langle {\bf y}, \cdot\rangle d\gamma({\bf x, y}): \mathbb{R}^n \rightarrow \mathbb{R}$$ 
is a bounded linear operator:  for any ${\bf z} \in \mathbb{R}^n$, 
\begin{equation*}
\begin{split}
    \Big |\int_{\mathbb{R}^n \times \mathbb{R}^n} \langle {\bf p}, {\bf x} \rangle \langle {\bf y}, {\bf z} \rangle d\gamma({\bf x, y}) \Big |^2
   & \leq \int_{\mathbb{R}^n}  |\langle {\bf p}, {\bf x} \rangle|^2 d\mu({\bf x}) \int_{\mathbb{R}^n}  |\langle {\bf z}, {\bf y}\rangle|^2 d\nu({\bf y}) \\
   & \leq \Vert {\bf p} \Vert^2 M_2(\mu) M_2(\nu) \  \Vert {\bf z} \Vert^2.
\end{split}
\end{equation*}
Similarly, for any fixed ${\bf q}$ in $\mathbb{R}^n$, the following linear operator
$$\int_{\mathbb{R}^n \times \mathbb{R}^n} \langle \cdot, {\bf x} \rangle \langle {\bf y, q}\rangle d\gamma({\bf x, y}): \mathbb{R}^n \rightarrow \mathbb{R}$$ 
is also bounded.
\end{proof}

 \begin{proof}[Proof of Proposition \ref{dualperturbation}]
Since $\eta \in \mathcal{P}_2(\mathbb{R}^n)$, then $\eta$ is Bessel with bound  $ M_2(\eta)$. Next, let us show the lower frame bound. Define a linear operator $L: \mathbb{R}^n \rightarrow \mathbb{R}^n$ by 
     \begin{equation*}
         L({\bf f}) = \int_{\mathbb{R}^n \times \mathbb{R}^n} \left\langle {\bf f}, T({\bf x}) \right\rangle {\bf z}  d\gamma({\bf x, z}), \ \text{for any} \ {\bf f} \in \mathbb{R}^n.
     \end{equation*}
Since $ T_\# \mu$ is a probabilistic dual frame to $\mu$ and $\gamma \in \Gamma(\mu, \eta)$, then 
     \begin{equation*}
    {\bf f} = \int_{\mathbb{R}^n} \left\langle {\bf f}, T({\bf x}) \right\rangle {\bf x}  d\mu({\bf x})= \int_{\mathbb{R}^n \times \mathbb{R}^n } \left\langle {\bf f}, T({\bf x}) \right\rangle {\bf x}   d\gamma({\bf x, z}) , \ \text{for any} \ {\bf f} \in \mathbb{R}^n.
\end{equation*}
Therefore, 
\begin{equation*}
\begin{split}
    \Vert {\bf f} -L({\bf f}) \Vert &=  \Big \Vert \int_{\mathbb{R}^n \times \mathbb{R}^n } \left\langle {\bf f}, T({\bf x}) \right\rangle {\bf x}   d\gamma({\bf x, z})- \int_{\mathbb{R}^n \times \mathbb{R}^n} \left\langle {\bf f}, T({\bf x}) \right\rangle {\bf z}  d\gamma({\bf x, z}) \Big \Vert \\
    &= \Big \Vert  \int_{\mathbb{R}^n \times \mathbb{R}^n } \left\langle {\bf f}, T({\bf x}) \right\rangle ({\bf x} -{\bf z})   d\gamma({\bf x, z}) \Big \Vert \leq  \kappa \Vert {\bf f}  \Vert < \Vert {\bf f}  \Vert.
\end{split}
\end{equation*}
Thus, $L: \mathbb{R}^n \rightarrow \mathbb{R}^n$ is invertible and $\Vert L^{-1} \Vert \leq \frac{1}{1-\kappa} $. Then for any ${\bf f} \in \mathbb{R}^n$, 
\begin{equation*}
    {\bf f} = LL^{-1}({\bf f}) = \int_{\mathbb{R}^n \times \mathbb{R}^n} \left\langle L^{-1}{\bf f}, T({\bf x}) \right\rangle {\bf z}  d\gamma({\bf x, z}).
\end{equation*}
Therefore, 
\begin{equation*}
\begin{split}
   \Vert  {\bf f} \Vert^4 &=  |\left\langle {\bf f}, {\bf f} \right\rangle|^2 = \Big \vert \int_{\mathbb{R}^n \times \mathbb{R}^n} \left\langle L^{-1}{\bf f}, T({\bf x}) \right\rangle \left\langle {\bf f}, {\bf z} \right\rangle d\gamma({\bf x, z}) \Big \vert^2 \\
   & \leq \int_{\mathbb{R}^n \times \mathbb{R}^n} |\left\langle L^{-1}{\bf f}, T({\bf x}) \right\rangle|^2 d\gamma({\bf x, z})  \ \int_{\mathbb{R}^n \times \mathbb{R}^n} |\left\langle {\bf f}, {\bf z} \right\rangle|^2 d\gamma({\bf x, z})\\
    & = \int_{\mathbb{R}^n} |\left\langle L^{-1}{\bf f}, {\bf x} \right\rangle|^2 dT_\#\mu({\bf x})  \ \int_{\mathbb{R}^n} |\left\langle {\bf f}, {\bf z} \right\rangle|^2 d\eta({\bf z})\\
  &\leq B \Vert L^{-1} {\bf f}\Vert^2  \ \int_{\mathbb{R}^n} |\left\langle {\bf f}, {\bf z} \right\rangle|^2 d\eta({\bf z}) \leq  \frac{B\Vert {\bf f} \Vert^2}{(1- \kappa)^2}  \ \int_{\mathbb{R}^n} |\left\langle {\bf f}, {\bf z} \right\rangle|^2 d\eta({\bf z}),
\end{split}
\end{equation*}
where the first inequality is due to Cauchy-Schwarz inequality and the second inequality follows from the frame property of $T_\#\mu$. 
Thus, for any ${\bf f} \in \mathbb{R}^n$, 
\begin{equation*}
  \frac{(1- \kappa)^2}{B}    \Vert  {\bf f} \Vert^2  \leq  \int_{\mathbb{R}^n} |\left\langle {\bf f}, {\bf z} \right\rangle|^2 d\eta({\bf z}) \leq  M_2(\eta) \Vert  {\bf f} \Vert^2.
\end{equation*} 
Therefore, $\eta$ is a probabilistic frame with bounds $ \frac{(1- \kappa)^2}{B} \ \text{and} \ M_2(\eta)$. Since $M_2(T_\#\mu)$ is always an upper bound for $T_\#\mu$, then $\eta$ is a probabilistic frame with bounds
\begin{equation*}
    \frac{(1- \kappa)^2}{M_2(T_\#\mu)} \ \text{and} \ M_2(\eta).
\end{equation*}
\end{proof}

{\bf Data Availability}: No data were used in this study.

{\bf Funding}: The author received no funding for this work.

{\bf Acknowledgment}: The author would like to thank the reviewers for their constructive comments.

\section*{Declarations}
{\bf Competing Interests}: The author declares no competing interests.


\end{document}